\theoremstyle{plain}
\declaretheorem[title=Theorem]{theorem}
\declaretheorem[title=Lemma,sibling=theorem]{lemma}
\declaretheorem[title=Proposition,sibling=theorem]{proposition}
\theoremstyle{definition}
\declaretheorem[numbered=no,title=Remark]{remark*}
\newcommand{\R}{\mathbb{R}}
\newcommand{\T}{\mathbb{T}}
\newcommand{\C}{\mathcal{C}}
\newcommand{\eps}{\varepsilon}
\newcommand{\POne}{\text{(P1)}}
\newcommand{\PTwo}{\text{(P2)}}
\newcommand{\QOne}{(Q1)}
\newcommand{\QTwo}{(Q2)}
\begin{document}

\title{Homogenization of L\'{e}vy-type operators with oscillating coefficients}

\author{M. Kassmann\footnote{Fakult\"{a}t f\"{u}r Mathematik, Universit\"{a}t
Bielefeld, Postfach
100131, D-335001 Bielefeld, Germany, email:
\emph{moritz.kassmann@uni-bielefeld.de}},\ \
A. Piatnitski\footnote{The Arctic University of Norway, Campus in
Narvik, P.O. Box 385, Narvik 8505, Norway \ and \ Institute for Information Transmission Problems
of RAS,127051 Moscow, Russia, email: \emph{apiatnitski@gmail.com}},\ \
E. Zhizhina\footnote{Institute for Information Transmission
Problems of RAS, 127051 Moscow, Russia, email: \emph{ejj@iitp.ru}}}

\date{\today}
\maketitle
\parskip 0.1 truein
\parindent0ex
%
%


\begin{abstract}
The paper deals with homogenization of L\'{e}vy-type operators with  rapidly
oscillating
coefficients. We consider cases of periodic and random statistically
homogeneous micro-structures and show that in the limit we obtain a
L\'{e}vy-operator. In the periodic case we study both symmetric and
non-symmetric kernels whereas in the random case we only investigate symmetric
kernels. We also address a nonlinear version of this homogenization problem.
\end{abstract}

\emph{MSC}: 45E10, 60J75, 35B27, 45M05

\emph{Keywords}: homogenization,  L\'{e}vy-type operator, jump process.

\emph{Acknowledgement:} Financial support from the German Science Foundation
via Sonderforschungsbereich 1283 is gratefully acknowledged.

\section{Introduction}

The paper deals with a homogenization problem for  L\'{e}vy-type operators of
the form
\begin{align}\label{eq:def-Leps}
L^\eps u (x)=\int\limits_{\mathbb R^d}
\frac{u(y)-u(x)}{|x-y|^{d+\alpha}} \, \Lambda^\eps \left(x,y\right) dy \qquad
(x \in \R^d)\,,
\end{align}
where $\alpha \in (0,2)$ is fixed, $u\in L^2(\mathbb R^d)$ and $\eps>0$ is
a small parameter. We will
study various assumptions on the function $(x,y) \mapsto \Lambda^\eps
\left(x,y\right)$. Throughout the article we assume
\begin{align}\label{eq:ellipticity}
\gamma^{-1}\leq \Lambda^\eps(x,y)\leq \gamma \qquad (x, y \in \R^d)
\end{align}
for some $\gamma > 1$, which can be seen as an ellipticity assumption.
Particular cases that we cover include $\Lambda^\eps(x,y)= \Lambda\left(\frac
x\eps, \frac y\eps\right)$ resp. $\Lambda^\eps(x,y)= \Lambda\left(\frac
x\eps, y\right) + \Lambda\left(x, \frac
y\eps \right)$, where
$(\xi,\eta) \mapsto \Lambda(\xi,\eta)$ is symmetric and periodic both in $\xi$
and $\eta$. Note that we also deal with some classes of non-symmetric kernels
and of random symmetric kernels. Moreover, the approach allows to treat
nonlinear nonlocal operators such as the fractional $p$-Laplace operator.

\medskip

Given $\eps > 0$, we first introduce a positive
self-adjoint extension of the operator $-L^\eps$ and then study the following
homogenization problem:

\hspace*{3ex} \begin{minipage}{0.85\textwidth}
Find an operator $L^0$
 such that for any $m>0$ and for any $f\in L^2(\mathbb R^d)$ the solutions
$u^\eps$ of the equations
 $-L^\eps u^\eps+ m u^\eps=f$ converge, as $\eps\to0$, to the solution of the
equation $-L^0 u+mu=f$.
\end{minipage}

Given $\eps > 0$, the operator $L^\eps$ describes a jump process in a
non-homogeneous medium with a periodic micro-structure. For $\Lambda^\eps=1$
this operator coincides, up to a multiplicative constant, with the fractional
Laplacian
$(-\Delta)^{\alpha/2}$, which is the infinitesimal generator of the
rotationally symmetric $\alpha$-stable process \cite{Sat13}. As we show in this
work, the computation of the homogenization limit for a nonlocal operator of
fractional order $\alpha$ of differentiability is rather different from the
corresponding object for differential operators. In the symmetric case, it
turns out that the effective jump rate is given as a simple average whereas this
is easily seen to be false for differential operators. In the non-symmetric
case treated in \autoref{theo:nonsym}, however, we face similar phenomenons as
in the case of local differential operators.

\medskip

Let us formulate our main results. We consider three different
settings. Note that throughout the paper we deal with bilinear forms
resp. weak solutions because, the expression $L^\eps u (x) $ might not exist
point-wisely, even for $u \in C^\infty_0(\R^d)$ and $\Lambda^\eps$ as in the
aforementioned example. Some additional regularity of $\Lambda^\eps$ at the
diagonal $x=y$ would be needed otherwise. Let us now present the three settings
of our study.

\emph{(I) Symmetrizable and symmetric periodic kernels:} Here we assume
that $\Lambda^\eps$ is a  positive function satisfying one of the
following two
conditions.

\begin{itemize}
 \item[{\POne}] Product structure: We assume
\begin{align}\label{eq:case_product}
\Lambda^\eps(x,y)= \lambda\left(\frac x\eps\right)\mu\left(\frac y\eps\right)
\end{align}
with $\lambda$ and $\mu$ being $1$-periodic in each direction and satisfying
 \begin{align}\label{eq:elli_prod}
\gamma^{-1}\leq \lambda(\xi)\leq \gamma,\quad \gamma^{-1}\leq \mu(\eta)\leq
\gamma.
\end{align}
\item[{\PTwo}] Symmetric locally periodic kernels: We assume
\begin{align}\label{eq:case_locally}
\Lambda^\eps(x,y)= \Lambda\left(x,y,\frac x\eps, \frac y\eps\right)
\end{align}
with a function $\Lambda(x,y,\xi,\eta)$ that is continuous in $(x,y)$,
periodic measurable in $(\xi,\eta)$,
and satisfies the following conditions:
\begin{align*}
\left.
\begin{array}{rl}
\Lambda(x,y,\xi,\eta) &=\Lambda(y,x,\eta,\xi) \\
\gamma^{-1} &\leq \Lambda(x,y,\xi,\eta)\leq \gamma
\end{array}
\right\}
\text{ for all }
x,\,y,\,\xi,\,\eta\in \mathbb R^d \,.
\end{align*}
\end{itemize}

In order to characterize the limit behaviour of $u^\eps$ we introduce an
operator
\begin{align}\label{eq:limit_operator}
L^0 u(x)=\int\limits_{\mathbb R^d} \frac{\displaystyle \Lambda^{\rm eff}
\big(u(y)-u(x)\big)}{|y-x|^{d+\alpha}}\:dy
\end{align}
where
\begin{align}\label{eq:Lambda-eff-sym}
\Lambda^{\rm eff}(x,y) =
\begin{cases}
\bigg(\int_{[0,1]^d}\frac{\mu(\xi)}{\lambda(\xi)}d\xi\bigg)^{-1}
\bigg(\int_{[0,1]^d}\mu(\xi)d\xi\bigg)^2 &\text{ in Case } {\POne} \,, \\
\int_{[0,1]^d}\int_{[0,1]^d}\Lambda(x,y,\xi,\eta)\,d\xi
d\eta &\text{ in Case } {\PTwo} \,.
\end{cases}
\end{align}

\begin{theorem}\label{theo:symm-kernels}
Assume that one of the conditions {\POne}, {\PTwo} holds true. Let $m > 0$. Then
for every $f\in L^2(\mathbb R^d)$ the solution $u^\eps$
of equation
\begin{align}\label{eq:ori_eqn}
(L^\eps-m)u^\eps=f
\end{align}
converges strongly in $L^2(\mathbb R^d)$ and weakly in $H^{\alpha/2}(\mathbb R^
d)$ to the solution $u^0$ of the equation
\begin{align}\label{eq:lim_eqn}
(L^0-m)u^0=f\,.
\end{align}
\end{theorem}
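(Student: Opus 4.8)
The plan is to work entirely with the symmetric bilinear forms attached to $-L^\eps$ and to run a ``standard'' homogenization scheme: (i) uniform a priori bounds in $H^{\alpha/2}(\R^d)$, (ii) compactness in $L^2(\R^d)$, and (iii) passage to the limit in the form, the last step being where the simple–average structure of $\Lambda^{\rm eff}$ appears.

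\emph{Reduction to a symmetric kernel.} In Case \POne{} the operator $L^\eps$ is not self-adjoint in $L^2(dx)$, but it is self-adjoint in $L^2(\rho^\eps\,dx)$ with the weight $\rho^\eps(x)=\mu(x/\eps)/\lambda(x/\eps)$: multiplying the kernel of $L^\eps$ by $\rho^\eps(x)$ makes it symmetric in $x,y$ and equal to $\mu(x/\eps)\mu(y/\eps)$, and the associated Dirichlet form becomes
\[
\mathcal E^\eps(u,v)=\tfrac12\int_{\R^d}\!\int_{\R^d}\frac{(u(y)-u(x))(v(y)-v(x))}{|x-y|^{d+\alpha}}\,\mu(x/\eps)\,\mu(y/\eps)\,dy\,dx .
\]
Since $\gamma^{-2}\le\rho^\eps\le\gamma^2$, the spaces $L^2(\rho^\eps dx)$ and $L^2(dx)$ agree with uniformly equivalent norms, and \eqref{eq:ori_eqn} is equivalent to $\mathcal E^\eps(u^\eps,v)+m(u^\eps,v)_{\rho^\eps}=-(f,v)_{\rho^\eps}$ for all $v\in H^{\alpha/2}(\R^d)$, where $(\cdot,\cdot)_{\rho^\eps}$ is the $L^2(\rho^\eps dx)$ inner product. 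Case \PTwo{} already carries a symmetric kernel, so one takes $\rho^\eps\equiv1$ and $\mathcal E^\eps(u,v)=\tfrac12\iint\frac{(u(y)-u(x))(v(y)-v(x))}{|x-y|^{d+\alpha}}\Lambda(x,y,x/\eps,y/\eps)\,dy\,dx$. In both situations the ellipticity gives $\mathcal E^\eps(w,w)\ge c_d\gamma^{-1}[w]_{H^{\alpha/2}}^2$, $[w]_{H^{\alpha/2}}$ the Gagliardo seminorm.

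\emph{A priori bounds and compactness.} Testing with $v=u^\eps$ and using $\mathcal E^\eps\ge0$, $m>0$ and the two-sided bounds on $\rho^\eps$ gives first $\|u^\eps\|_{L^2}\le C\|f\|_{L^2}$ and then $\mathcal E^\eps(u^\eps,u^\eps)\le C\|f\|_{L^2}^2$, hence $[u^\eps]_{H^{\alpha/2}}\le C\|f\|_{L^2}$; so $\{u^\eps\}$ is bounded in $H^{\alpha/2}(\R^d)$. Along a subsequence $u^\eps\rightharpoonup u^0$ weakly in $H^{\alpha/2}$; the local compact embedding $H^{\alpha/2}_{\mathrm{loc}}\hookrightarrow\!\hookrightarrow L^2_{\mathrm{loc}}$ together with a uniform tail estimate (test the equation against $\eta_R^2u^\eps$ with a cutoff $\eta_R$ vanishing on $B_R$, and use $m>0$ to absorb the nonlocal cut-off error) upgrades this to $u^\eps\to u^0$ strongly in $L^2(\R^d)$.

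\emph{Passage to the limit in the form.} Fix $v\in C_c^\infty(\R^d)$ and split $\mathcal E^\eps(u^\eps,v)=I^\eps_\delta+II^\eps_\delta$ according to $|x-y|<\delta$ resp.\ $|x-y|\ge\delta$. For the near-diagonal part, Cauchy--Schwarz, $|v(y)-v(x)|\le\|\nabla v\|_\infty|x-y|$ and $[u^\eps]_{H^{\alpha/2}}\le C$ yield $|I^\eps_\delta|\le C(v,f)\,\delta^{(2-\alpha)/2}$ uniformly in $\eps$ (this is exactly where $\alpha<2$ is used), with the same bound for the corresponding piece of $\mathcal E^0$. For the far part, symmetry of the kernel lets us write $II^\eps_\delta=\int_{\R^d}u^\eps(x)\,\Psi^\eps_\delta(x)\,dx$ with $\Psi^\eps_\delta(x)=\int_{|x-y|\ge\delta}\frac{v(x)-v(y)}{|x-y|^{d+\alpha}}\Lambda(x,y,x/\eps,y/\eps)\,dy$ (where $\Lambda(x,y,\xi,\eta)=\mu(\xi)\mu(\eta)$ in Case \POne). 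The family $\{\Psi^\eps_\delta\}_\eps$ is bounded in $L^2(\R^d)$ (uniform $L^\infty$ bound and $O(|x|^{-d-\alpha})$ decay from $v\in C_c^\infty$), and the key lemma — the point at which periodicity and nonlocality interact — is that for any bounded compactly supported $\psi$ with $\operatorname{supp}\psi\subset\{|x-y|\ge\delta\}$,
\[
\int\!\!\int \psi(x,y)\,\Lambda(x,y,x/\eps,y/\eps)\,dy\,dx\ \xrightarrow[\eps\to0]{}\ \int\!\!\int\psi(x,y)\,\bar\Lambda(x,y)\,dy\,dx,\quad \bar\Lambda(x,y):=\int_{[0,1]^d}\!\!\int_{[0,1]^d}\!\Lambda(x,y,\xi,\eta)\,d\xi\,d\eta ,
\]
that is, for point pairs at a fixed positive distance the fast variables $x/\eps$, $y/\eps$ equidistribute independently, so the two-scale limit is the plain average over the torus in both fast variables. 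One proves this by approximating $\Lambda$ uniformly by finite sums $\sum a_i(x,y)b_i(\xi)c_i(\eta)$ (Stone--Weierstrass, using continuity in $(x,y)$ and periodicity) and applying the scalar weak-$*$ convergence $b(\cdot/\eps)\rightharpoonup\bar b$ twice, the separation $|x-y|\ge\delta$ ensuring the two slot functions decorrelate. Hence $\Psi^\eps_\delta\rightharpoonup\bar\Psi_\delta$ weakly in $L^2$, and since $u^\eps\to u^0$ strongly in $L^2$ and $\{\Psi^\eps_\delta\}$ is $L^2$-bounded, $II^\eps_\delta\to\int u^0\bar\Psi_\delta=\tfrac12\iint_{|x-y|\ge\delta}\frac{(u^0(y)-u^0(x))(v(y)-v(x))}{|x-y|^{d+\alpha}}\bar\Lambda(x,y)\,dy\,dx$ (using symmetry of $\bar\Lambda$). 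Letting first $\eps\to0$ and then $\delta\to0$ yields $\mathcal E^\eps(u^\eps,v)\to\mathcal E^0(u^0,v)$, where $\mathcal E^0$ is the symmetric form of $L^0$ with kernel $\bar\Lambda$. The zeroth-order and right-hand terms pass to the limit because $\rho^\eps\rightharpoonup\bar\rho$ weak-$*$ in $L^\infty$ with $\bar\rho=\int_{[0,1]^d}\mu/\lambda$ (resp.\ $\bar\rho\equiv1$) and $u^\eps\to u^0$ strongly in $L^2$; dividing the resulting identity $\mathcal E^0(u^0,v)+m\bar\rho(u^0,v)_{L^2}=-\bar\rho(f,v)_{L^2}$ by $\bar\rho$ one reads off exactly the weak form of \eqref{eq:lim_eqn} with $\Lambda^{\rm eff}$ as in \eqref{eq:Lambda-eff-sym} — in Case \POne{} the weight contributes the factor $\bar\rho^{-1}=(\int_{[0,1]^d}\mu/\lambda)^{-1}$ while $\bar\Lambda=(\int_{[0,1]^d}\mu)^2$. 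By density this holds for all $v\in H^{\alpha/2}$, and uniqueness of the solution of the limit problem (Lax--Milgram, $m>0$) forces the entire family $u^\eps$ — not just the subsequence — to converge strongly in $L^2$ and weakly in $H^{\alpha/2}$ to $u^0$. I expect the far-field lemma to be the main obstacle: unlike the local case there is no corrector, and the work is in showing that rescaled coefficients along pairs of distant points become jointly equidistributed; the remaining difficulties (uniform control of the diagonal singularity, global rather than local $L^2$ compactness) are technical and rely on $\alpha<2$ and $m>0$ respectively.
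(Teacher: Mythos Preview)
Your argument is correct and closely tracks the paper's second proof: uniform $H^{\alpha/2}$ bounds, compactness, splitting of the bilinear form into a near-diagonal piece (controlled by Cauchy--Schwarz and the Lipschitz bound on the test function) and a far piece on which the oscillating coefficient converges weakly to its average. The paper also gives a $\Gamma$-convergence proof for Case \POne, but its direct proof for Case \PTwo{} is essentially your scheme.

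Two small points. First, the paper establishes strong $L^2(\R^d)$ convergence only \emph{after} identifying the limit equation, via a lower-semicontinuity argument (test \eqref{eq:ori_eqn} with $u^\eps$, use $\liminf(-L^\eps u^\eps,u^\eps)\ge(-L^0u,u)$, and compare with $(f,u)$); your cutoff/tail estimate is a legitimate alternative but requires handling the nonlocal commutator with $\eta_R$. Second, your remark that the separation $|x-y|\ge\delta$ ``ensures the two slot functions decorrelate'' is a red herring: the weak convergence $\Lambda(x,y,x/\eps,y/\eps)\rightharpoonup\bar\Lambda(x,y)$ in $L^2_{\rm loc}(\R^d\times\R^d)$ holds on all of $\R^{2d}$ simply because the function is periodic in the $2d$-dimensional fast variable (this is what the paper cites from Zhikov, and what your Stone--Weierstrass argument actually shows); the $\delta$-cutoff is there only to make the singular kernel integrable.
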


\begin{remark*}
\begin{enumerate}
 \item[(i)] Case {\PTwo} contains the particular case of pure periodic
coefficients, which we have mentioned above. If one assumes
$\Lambda^\eps(x,y)=
\Lambda\left(\frac x\eps, \frac y\eps\right)$ with a function
$\Lambda(\xi,\eta)$ that is periodic both in $\xi$ and $\eta$ and satisfies for
all $\xi,\,\eta\in \mathbb R^d$ the conditions
$\Lambda(\xi,\eta)=\Lambda(\eta,\xi)$ $\gamma^{-1}\leq \Lambda(\xi,\eta)\leq
\gamma$, then this case covered by {\PTwo}.
\item[(ii)] In Case {\POne} the function $\Lambda^{\rm eff}$ is
constant, i.e., the operator $L^0$ is invariant under translations.
\item[(iii)] In Case {\PTwo} we can choose $\Lambda^\eps(x,y)=
a(x,y) \Lambda\left(\frac x\eps, \frac y\eps\right)$ with a function
$a:\R^d \times \R^d \to [a_0, a_1] \subset (0,\infty)$. In this case
\begin{align*}
\Lambda^{\rm eff}(x,y) =
a(x,y) \int_{[0,1]^d}\int_{[0,1]^d}\Lambda(\xi,\eta)\,d\xi
d\eta\,,
\end{align*}
i.e., the limit operator $L^0$ is a nonlocal operator with bounded and
measurable coefficients.
\end{enumerate}
\end{remark*}

\autoref{theo:symm-kernels} deals with linear nonlocal operators. The methods
of its proof can be applied to nonlinear problems, too. Let us provide a
nonlocal analog of \autoref{theo:symm-kernels}. Assume $p >1$. Given $\eps
> 0$, define a nonlinear version $L_p^\eps$ of $L^\eps$ by
\begin{align}\label{eq:nonlin-Leps}
L_p^\eps u(x)=\int\limits_{\mathbb R^d}
\frac{|u(y)\!-\!u(x)|^{p-2} (u(y)-u(x))}{|x-y|^{d+
\alpha}}\Lambda^\eps(x,y)  \,dy \qquad (x \in \R^d) \,.
\end{align}

\begin{theorem}\label{theo:nonlin}
Assume that one of the conditions {\POne}, {\PTwo} holds true. Let $m > 0$, $p
> 1$ and $p' = \frac{p-1}{p}$. For any $f\in L^{p'}(\mathbb R^d)$ the solution
$u^\eps$ of equation
\begin{align}\label{eq:nonlin-eps-eq}
  L_p^\eps u - m |u|^{p-2} u =f
\end{align}
converges strongly in $L^p(\mathbb R^d)$ and weakly in
$W^{\frac{\alpha}{p},p}(\mathbb R^d)$, as $\eps\to0$, to the solution $u^0$ of
the equation $L_p^0 u^0 - m |u^0|^{p-2} u^0 =f $, where
\begin{align*}
 L_p^0 u(x)=\int_{\mathbb R^d}
\frac{|u(y)\!-\!u(x)|^{p-2} (u(y)-u(x))}{|x-y|^{d+ \alpha}}
\Lambda^{\rm eff}(x,y) \,dy
\end{align*}
and $\Lambda^{\rm eff}(x,y)$ is as in \eqref{eq:Lambda-eff-sym}.
\end{theorem}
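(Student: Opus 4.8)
My plan is to reformulate both the $\eps$-problem and the limit problem variationally and to prove homogenization by $\Gamma$-convergence together with equi-coercivity. Fix $m>0$, $p>1$ and work in the energy space $W^{\alpha/p,p}(\R^d)$ with Gagliardo seminorm $[w]^p=\iint_{\R^d\times\R^d}\frac{|w(y)-w(x)|^p}{|x-y|^{d+\alpha}}\d x\d y$, a reflexive Banach space that embeds compactly into $L^p_{\rm loc}(\R^d)$. In Case $\PTwo$ the kernel $\Lambda^\eps$ is symmetric, and testing \eqref{eq:nonlin-eps-eq} against $v$ shows that $u^\eps$ is the unique minimizer over $W^{\alpha/p,p}(\R^d)$ of
\begin{align*}
J^\eps(w)={}&\frac1{2p}\iint_{\R^d\times\R^d}\frac{|w(y)-w(x)|^p}{|x-y|^{d+\alpha}}\,\Lambda^\eps(x,y)\d x\d y\\
&{}+\frac mp\int_{\R^d}|w|^p\d x+\int_{\R^d}f\,w\d x ;
\end{align*}
strict convexity of $J^\eps$ and its coercivity (from $\Lambda^\eps\ge\gamma^{-1}$ and Young's inequality on the term $\int f\,w$) give a unique minimizer, which by strict monotonicity of $w\mapsto -L_p^\eps w+m|w|^{p-2}w$ is the unique solution of the equation. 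In Case $\POne$ the kernel $\Lambda^\eps(x,y)=\lambda(x/\eps)\mu(y/\eps)$ is not symmetric, but the oscillating weight $m^\eps(x)=(\mu/\lambda)(x/\eps)$ restores detailed balance: testing \eqref{eq:nonlin-eps-eq} against $v\,m^\eps$ and using $\lambda(x/\eps)\,m^\eps(x)=\mu(x/\eps)$ identifies $u^\eps$ as the minimizer of the same functional with $\Lambda^\eps$ replaced by the symmetric kernel $\mu(x/\eps)\mu(y/\eps)$ and with the lower order terms $\frac mp\int|w|^p m^\eps\d x+\int f\,w\,m^\eps\d x$. Thus Case $\POne$ reduces to a Case $\PTwo$-type problem carrying an extra bounded oscillating weight.

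The heart of the proof is to show that $J^\eps$ $\Gamma$-converges, with respect to weak convergence in $W^{\alpha/p,p}(\R^d)$ (equivalently: boundedness in $W^{\alpha/p,p}$ together with strong convergence in $L^p_{\rm loc}$), to the functional $J^0$ obtained from $J^\eps$ by replacing $\Lambda^\eps$ with $\Lambda^{\rm eff}$ from \eqref{eq:Lambda-eff-sym} and, in Case $\POne$, also replacing $m^\eps$ by its average $\overline m=\int_{[0,1]^d}\frac{\mu(\xi)}{\lambda(\xi)}\d\xi$. Computing the Euler--Lagrange equation of $J^0$ (in Case $\POne$ after dividing by $\overline m$, which is exactly where the factor $(\int_{[0,1]^d}\mu/\lambda)^{-1}(\int_{[0,1]^d}\mu)^2$ appears) shows that its unique minimizer $u^0$ solves $L_p^0u^0-m|u^0|^{p-2}u^0=f$. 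Since $\Lambda^\eps\ge\gamma^{-1}$ and $m^\eps\ge\gamma^{-2}$, the family $(J^\eps)$ is equi-coercive on $W^{\alpha/p,p}(\R^d)$; the standard consequence of $\Gamma$-convergence plus equi-coercivity then gives $u^\eps\rightharpoonup u^0$ weakly in $W^{\alpha/p,p}(\R^d)$ and $J^\eps(u^\eps)\to J^0(u^0)$, and strict convexity of $J^0$ makes the limit unique (so the whole family converges, not just subsequences). Finally, each term of $J^\eps$ is separately weakly lower semicontinuous, so convergence of the energies forces $\|u^\eps\|_{L^p}\to\|u^0\|_{L^p}$; together with weak $L^p$ convergence and uniform convexity of $L^p(\R^d)$ this upgrades to strong convergence in $L^p(\R^d)$, yielding both convergences claimed in the theorem. (For $p=2$ this scheme is precisely Mosco convergence of the associated Dirichlet forms; the argument here is its convex, nonlinear counterpart, so no genuinely new ideas over the proof of \autoref{theo:symm-kernels} are required.)

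For the two $\Gamma$-inequalities I use the elementary periodic averaging fact that if $b(z,\zeta)$ is measurable and $\Z^{2d}$-periodic in $\zeta$ and continuous in $z$, then $b(z,z/\eps)\to\int_{[0,1]^{2d}}b(z,\zeta)\d\zeta$ weakly-$\ast$ in $L^\infty(\R^{2d})$; with $z=(x,y)$ this gives $\Lambda^\eps\to\Lambda^{\rm eff}$ weakly-$\ast$ in $L^\infty(\R^d\times\R^d)$ (and $m^\eps\to\overline m$ weakly-$\ast$ in $L^\infty$). The $\Gamma$-$\limsup$ inequality is then obtained with the \emph{constant} recovery sequence $u^\eps\equiv u$: since $\frac{|u(y)-u(x)|^p}{|x-y|^{d+\alpha}}\in L^1(\R^d\times\R^d)$ for $u\in W^{\alpha/p,p}(\R^d)$, pairing the weak-$\ast$ convergence against this $L^1$ integrand --- after using $\gamma^{-1}\le\Lambda^\eps\le\gamma$ to make the near-diagonal region $\{|x-y|<\delta\}$ and the far region $\{|x-y|>R\}$ uniformly small as $\delta\to0$, $R\to\infty$ --- gives $J^\eps(u)\to J^0(u)$. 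For the $\Gamma$-$\liminf$, given $u^\eps\rightharpoonup u$ in $W^{\alpha/p,p}(\R^d)$, restrict the energy integral to $\{|x|\le M,\ \delta\le|x-y|\le R\}$; there $u^\eps\to u$ strongly in $L^p$ by Rellich compactness, hence $\frac{|u^\eps(y)-u^\eps(x)|^p}{|x-y|^{d+\alpha}}\to\frac{|u(y)-u(x)|^p}{|x-y|^{d+\alpha}}$ strongly in $L^1$ on that bounded set, and strong $\times$ weak-$\ast$ convergence lets one pass to the limit; letting $\delta\to0$ and $M,R\to\infty$, monotone convergence yields the lower bound for the leading term, while the lower order terms are weakly lower semicontinuous (using that $f$ lies in the dual of $L^p$).

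I expect the $\Gamma$-$\liminf$ step to be the main obstacle: one must pass to the limit in the product of the only \emph{locally} strongly convergent nonlinear difference quotients $|u^\eps(y)-u^\eps(x)|^p/|x-y|^{d+\alpha}$ and the weakly-$\ast$ oscillating coefficient $\Lambda^\eps$, while simultaneously controlling the contribution near the diagonal (where the kernel is not uniformly integrable) and at spatial infinity; the truncate-then-relax argument above is designed to separate these issues. The second delicate point, specific to Case $\POne$, is identifying the symmetrizing weight $m^\eps=(\mu/\lambda)(x/\eps)$ and then correctly reading off $\Lambda^{\rm eff}=(\int_{[0,1]^d}\mu/\lambda)^{-1}(\int_{[0,1]^d}\mu)^2$ after undoing the change of measure. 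What makes the nonlocal problem \emph{easier} than its local (differential-operator) analogue, and explains why $\Lambda^{\rm eff}$ in \eqref{eq:Lambda-eff-sym} is a plain average rather than some harmonic-type average, is precisely that no oscillating corrector enters the recovery sequence --- the constant sequence $u^\eps\equiv u$ already suffices.
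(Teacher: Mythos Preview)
Your proposal is correct and follows essentially the same route as the paper. The paper's own proof of \autoref{theo:nonlin} is a one-paragraph sketch: it records the variational functional, derives the uniform bound \eqref{p-est} by testing the equation with $u^\eps$ and using the symmetrization identity, and then states that ``from here, the proof is the same as that of \autoref{theo:symm-kernels}.'' You have carried out precisely this adaptation, choosing the paper's first proof of \autoref{theo:symm-kernels} (the $\Gamma$-convergence argument in \autoref{subsec:proof-gamma-conv}): your symmetrizing weight $m^\eps=(\mu/\lambda)(x/\eps)$ is the paper's $\nu^\eps$, your truncation into $\{|x|\le M,\ \delta\le|x-y|\le R\}$ reproduces the decomposition into $G_1^\delta,G_2^\delta,G_3^\delta$, and your constant recovery sequence is exactly the paper's $\Gamma$-$\limsup$ construction. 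The one small addition worth noting is that the paper also offers a second proof of \autoref{theo:symm-kernels} (the direct compactness argument on the weak formulation in \autoref{subsec:2ndproof}, used for Case \PTwo), which could equally well be adapted to the nonlinear setting via the strong $L^p_{\rm loc}$ convergence of $u^\eps$ on $G_1^\delta$; your unified $\Gamma$-convergence treatment of both \POne\ and \PTwo\ is a legitimate alternative and arguably cleaner.
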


Obviously, \autoref{theo:nonlin} contains \autoref{theo:symm-kernels} because
we could choose $p=2$. Since the proof of \autoref{theo:nonlin} does not
require any new idea, we provide the proof of \autoref{theo:symm-kernels} in
full detail. In  \autoref{subsec:proof-nonlin} we explain how to derive
\autoref{theo:nonlin}.

\medskip \pagebreak[2]

\emph{(II) Symmetric random kernels:}

Let $(\Omega,\mathcal{F},\mathbf{P})$ be a standard probability space and
$(T_y)_{y\in \mathbb R^d}$, be a $d$-dimensional ergodic dynamical system in
$\Omega$; see \autoref{sec:sym_rand} for a detailed definition. As in the case
of deterministic symmetrizable kernels we consider two different setups.

\begin{itemize}
 \item[{\QOne}] Product structure: We assume \eqref{eq:case_product}, where
$\lambda(\xi)$ and $\mu(\xi)$ are realizations
of statistically homogeneous ergodic fields in $\mathbb R^d$. Let
$\omega \mapsto \widehat\lambda(\omega)$ and $\omega \mapsto
\widehat\mu(\omega)$ be random variables such that for some $\gamma>0$ and for
almost every $\omega\in\Omega$
\begin{align}\label{elli_rand}
\gamma^{-1}\leq \widehat\lambda(\omega)\leq \gamma,\qquad \gamma^{-1}\leq
\widehat\mu(\omega)\leq \gamma \,.
\end{align}

Set
\begin{align*}
\lambda(\xi)=\lambda(\xi,\omega)=\widehat\lambda(T_\xi\omega),\qquad
\mu(\xi)=\mu(\xi,\omega)=\widehat\mu(T_\xi\omega) \,.
\end{align*}

The limit  operator takes the form \eqref{eq:limit_operator} with
\begin{align*}
\Lambda^{\rm
eff}=\left(\mathbf{E}\Big\{\frac{\widehat\mu(\cdot)}{\widehat\lambda(\cdot)}
\Big\}
\right)^ {
-1}
\big\{\mathbf{E}\widehat\mu(\cdot)\big\}^2.
\end{align*}

\item[{\QTwo}]  Symmetric random structure: Here, we additionally
assume some topological structure. We assume that $\Omega$ is a metric compact
space. Assume $\mathcal{F}$ is the Borel $\sigma$-algebra of $\Omega$. We
further assume that the group $T_x$ is continuous,  that
$\Lambda=\Lambda(x,y,\omega_1,\omega_2)$ is a continuous function on $\mathbb
R^d\times\mathbb R^d\times\Omega\times\Omega$ and that the following symmetry
conditions is fulfilled:
$\Lambda(x,y,\omega_1,\omega_2)=\Lambda(y,x,\omega_2,\omega_1)$.
 In this case we set
\begin{align}
\Lambda^{\rm eff}(x,y)=\int_\Omega\int_\Omega
\Lambda(x,y,\omega_1,\omega_2)d\mathbf{P}(\omega_1)d\mathbf{P}
(\omega_2)\,.
\end{align}
\end{itemize}

\medskip

\begin{theorem}\label{theo:rand-kernels}
Assume that one of the conditions {\QOne}, {\QTwo} holds true. Let $m >
0$. Almost surely for any $f\in
L^2(\mathbb R^d)$ the solution $u^\eps$
of equation \eqref{eq:ori_eqn} converges strongly in $L^2(\mathbb R^d)$ and
weakly in $H^{\alpha/2}(\mathbb R^
d)$ to the solution $u^0$ of the equation \eqref{eq:lim_eqn}.
\end{theorem}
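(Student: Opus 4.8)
The plan is to adapt the variational/compactness machinery used for Theorem~\ref{theo:symm-kernels} to the stochastic setting, replacing periodic averaging by the ergodic theorem. First I would set up the functional-analytic framework: since all kernels are symmetric, the operator $-L^\eps$ is associated with the symmetric Dirichlet form
\begin{align*}
  \mathcal{E}^\eps(u,v)=\tfrac12\int_{\R^d}\int_{\R^d}\frac{\big(u(y)-u(x)\big)\big(v(y)-v(x)\big)}{|x-y|^{d+\alpha}}\,\Lambda^\eps(x,y)\,dx\,dy,
\end{align*}
which by the ellipticity bound \eqref{eq:ellipticity} is comparable to the $H^{\alpha/2}(\R^d)$ Gagliardo seminorm; hence the solution $u^\eps$ of \eqref{eq:ori_eqn} exists, is unique, and satisfies a uniform bound $\|u^\eps\|_{H^{\alpha/2}}\le C\|f\|_{L^2}$ with $C$ independent of $\eps$ and (almost surely) of $\omega$. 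By weak compactness in $H^{\alpha/2}(\R^d)$ and the Rellich-type compact embedding $H^{\alpha/2}_{loc}\hookrightarrow\hookrightarrow L^2_{loc}$, together with the mass term controlling the tails, we extract a subsequence with $u^\eps\rightharpoonup u^0$ weakly in $H^{\alpha/2}$ and strongly in $L^2_{loc}$ (and, using the $m u^\eps$ term, strongly in $L^2(\R^d)$). It then suffices to identify the limit equation; by uniqueness of $u^0$ the whole family converges.

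Next I would pass to the limit in the weak formulation $\mathcal{E}^\eps(u^\eps,\varphi)+m(u^\eps,\varphi)=-(f,\varphi)$ for test functions $\varphi\in C_0^\infty(\R^d)$. The mass term and right-hand side pass to the limit by the strong $L^2$ convergence, so the crux is
\begin{align*}
  \mathcal{E}^\eps(u^\eps,\varphi)=\tfrac12\int_{\R^d}\int_{\R^d}\frac{\big(u^\eps(y)-u^\eps(x)\big)\big(\varphi(y)-\varphi(x)\big)}{|x-y|^{d+\alpha}}\,\Lambda^\eps(x,y)\,dx\,dy\ \longrightarrow\ \tfrac12\int_{\R^d}\int_{\R^d}\frac{\big(u^0(y)-u^0(x)\big)\big(\varphi(y)-\varphi(x)\big)}{|x-y|^{d+\alpha}}\,\Lambda^{\rm eff}(x,y)\,dx\,dy.
\end{align*}
In Case \QTwo{} I would write $\Lambda^\eps(x,y)=\Lambda(x,y,T_{x/\eps}\omega,T_{y/\eps}\omega)$ and split off the leading singularity: near the diagonal ($|x-y|\le\delta$) one uses the uniform $H^{\alpha/2}$ bound and continuity of $\Lambda$ in $(x,y,\omega_1,\omega_2)$ with $\omega_1,\omega_2$ close, to make the contribution uniformly small in $\eps$; away from the diagonal the kernel $|x-y|^{-d-\alpha}$ is bounded, $\varphi(y)-\varphi(x)$ is a fixed bounded function, $u^\eps\to u^0$ strongly in $L^2_{loc}$, and the oscillating factor $\Lambda(x,y,T_{x/\eps}\omega,T_{y/\eps}\omega)$ converges weakly-$\ast$ in $L^\infty$ to its average $\int_\Omega\int_\Omega\Lambda(x,y,\omega_1,\omega_2)\,d\PP(\omega_1)\,d\PP(\omega_2)=\Lambda^{\rm eff}(x,y)$ by the Birkhoff ergodic theorem applied to the ergodic $\R^d$-action $T$ in the two variables $x/\eps$ and $y/\eps$ separately — a product-of-ergodic-averages argument, legitimate because the shifts in $x$ and $y$ are independent. (Almost-sure validity of the ergodic theorem simultaneously for a countable dense set of parameters, then upgraded by continuity, gives the single null set outside of which everything holds for all $f$.) Combining the strong $L^2_{loc}$ convergence of the $u^\eps$-difference quotients (tested against the fixed smooth $\varphi$-difference quotient) with the weak-$\ast$ $L^\infty$ convergence of $\Lambda^\eps$ yields the desired passage to the limit, so $u^0$ solves $(L^0-m)u^0=f$.

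Case \QOne{} is handled exactly as the periodic Case \POne{}: the product structure $\Lambda^\eps(x,y)=\lambda(x/\eps,\omega)\mu(y/\eps,\omega)$ allows a symmetrization/change of unknown, replacing $u^\eps$ by $w^\eps=\mu(\cdot/\eps,\omega)u^\eps$ (or the appropriate weighted variable suggested by the form of $\Lambda^{\rm eff}$), after which the oscillating coefficients decouple into one-point ergodic averages $\E\{\widehat\mu/\widehat\lambda\}$ and $\E\widehat\mu$; the same compactness-plus-ergodic-averaging argument then identifies $\Lambda^{\rm eff}=\big(\E\{\widehat\mu/\widehat\lambda\}\big)^{-1}\big(\E\widehat\mu\big)^2$. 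The main obstacle is the diagonal singularity: unlike in homogenization of local operators, one cannot simply invoke weak-$\ast$ convergence of the coefficient against a fixed bounded weight because $|x-y|^{-d-\alpha}$ is non-integrable near $x=y$; the resolution is the a~priori $H^{\alpha/2}$ bound on $u^\eps$, which makes the near-diagonal part of $\mathcal{E}^\eps(u^\eps,\varphi)$ small uniformly in $\eps$ (using $|\varphi(y)-\varphi(x)|\lesssim|x-y|$ there), reducing the problem to the non-singular regime where the ergodic theorem applies cleanly. A secondary technical point is ensuring the exceptional $\PP$-null set can be chosen independently of $f\in L^2(\R^d)$ and of the test function, which follows by separability and density once the convergence is established for a countable family.
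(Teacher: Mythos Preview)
Your overall strategy---uniform $H^{\alpha/2}$ bounds, compactness, splitting near/far the diagonal, ergodic averaging of the coefficient away from the diagonal---matches the paper's. But there is a genuine gap in Case~{\QTwo} that you wave past. You claim that $\Lambda(x,y,T_{x/\eps}\omega,T_{y/\eps}\omega)$ converges weak-$\ast$ to $\int_\Omega\int_\Omega\Lambda\,d\PP\,d\PP$ ``by the Birkhoff ergodic theorem applied \dots\ in the two variables $x/\eps$ and $y/\eps$ separately'' because ``the shifts in $x$ and $y$ are independent.'' They are not independent: both use the \emph{same} realization $\omega$, so the map $(x,y)\mapsto(T_{x/\eps}\omega,T_{y/\eps}\omega)$ traces out a $2d$-dimensional orbit inside the diagonal copy of $\Omega$ sitting in $\Omega\times\Omega$, not a generic orbit for the product action. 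Birkhoff does not directly yield the product integral $\int_\Omega\int_\Omega$. The paper closes this gap with an explicit lemma: using that $\Omega$ is a compact metric space (an assumption in {\QTwo} you never invoke), Stone--Weierstrass approximates $\Lambda(x_j,y_j,\omega_1,\omega_2)$ uniformly by finite sums $\sum_k\varphi_k(\omega_1)\psi_k(\omega_2)$, after which the $x$- and $y$-integrals genuinely decouple and one-parameter Birkhoff applies to each factor. Without something of this sort your weak-$\ast$ convergence claim is unproved.

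A smaller point: your near-diagonal argument says one uses ``continuity of $\Lambda$ in $(x,y,\omega_1,\omega_2)$ with $\omega_1,\omega_2$ close.'' That reasoning is wrong---for $|x-y|$ small but $\eps\ll|x-y|$, the points $T_{x/\eps}\omega$ and $T_{y/\eps}\omega$ are in no sense close---and it is also unnecessary: the near-diagonal smallness follows purely from the uniform bound $\gamma^{-1}\le\Lambda^\eps\le\gamma$, the $H^{\alpha/2}$ estimate on $u^\eps$, and the Lipschitz bound on $\varphi$ (as you in fact say later). For {\QOne} your idea is right in spirit, but the paper does not change the unknown; it multiplies the equation by the weight $\nu^\eps(x)=\mu^\eps(x)/\lambda^\eps(x)$ times the test function, which turns the kernel into the symmetric $\mu^\eps(x)\mu^\eps(y)$ and reduces matters to one-variable Birkhoff for $\mu^\eps$ and for $\mu^\eps/\lambda^\eps$---cleaner than substituting $w^\eps=\mu^\eps u^\eps$, which need not lie in $H^{\alpha/2}$.
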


\medskip

\emph{(III) Non-symmetric kernels:}

One important feature of our approach is that we can allow for certain
non-symmetric kernels in \eqref{eq:def-Leps}. In this case we assume
$0<\alpha<1$. We assume that $\Lambda^\eps$ is a positive function satisfying
$\Lambda^\eps(x,y)= \Lambda\left(\frac x\eps, \frac y\eps\right)$ for a
function
$\Lambda(\xi,\eta)$ that is periodic both in $\xi$ and $\eta$ and satisfies the
following conditions:

\begin{itemize}
\item[(i)] There is $\gamma > 1$ such that $\gamma^{-1}\leq
\Lambda(\zeta,\eta)\leq \gamma$ for all $\zeta$ and
$\eta$.
\item[(ii)] $\Lambda(\zeta,\eta)$ is Lipschitz continuous in each component.
\end{itemize}

As we explain in \autoref{sec:nonsym}, under these conditions  the map $v
\mapsto Lv$ with
\begin{align*}
L(\zeta)=\int\limits_{\mathbb
R^d}\frac{\Lambda(\zeta,\eta)\big(v(\eta)-v(\zeta)\big)}
{|\zeta-\eta|^{d+\alpha}}\,d\eta\,,
\end{align*}
defines an unbounded linear operator $L$ in $L^2(\T^d)$, whose adjoint is given
by
\begin{align*}
L^*q(\zeta)=\int\limits_{\mathbb
R^d}\frac{\big(\Lambda(\eta,\zeta)q(\eta)-\Lambda(\zeta,\eta)q(\zeta)\big)}
{|\zeta-\eta|^{d+\alpha}}\,d\eta \qquad (q \in L^2(\T^d))\,.
\end{align*}

\begin{theorem}\label{theo:nonsym}
For any $f\in L^2(\mathbb R^d)$ the solution $u^\eps$
of equation \eqref{eq:ori_eqn} converges stronlgy in $L^2(\mathbb R^d)$ and
weakly in
$H^{\alpha/2}(\mathbb R^ d)$ to the solution $u^0$
of \eqref{eq:lim_eqn}. Here, the effective jump kernel is given by
$\Lambda^{\rm eff}=\langle
p_0\rangle^{-1}\langle\Lambda p_0\rangle$, where $p_0$ is the
principal eigenfunction of the operator $L^*$ on $\mathbb T^d$, and $\langle\Lambda p_0\rangle
=\int\limits_{\mathbb T^d}\int\limits_{\mathbb
T^d}\Lambda(\xi,\eta)p_0(\xi)\,d\xi d\eta$.
\end{theorem}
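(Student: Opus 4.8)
The plan is to reduce the non-symmetric problem to a pure averaging problem by conjugating $L^\eps$ with the oscillating invariant density built from the principal eigenfunction $p_0$ of $L^*$ on $\T^d$, in the spirit of homogenization of non-self-adjoint differential operators --- the point being that the nonlocal structure makes the usual drift correction disappear entirely. From \autoref{sec:nonsym} I take that $L$ generates a positivity preserving semigroup on $L^2(\T^d)$, that $L$ annihilates constants, and hence, by the Krein--Rutman theorem, that $0$ is the principal eigenvalue of $L^*$ with a positive eigenfunction $p_0$ satisfying $0<p_-\le p_0\le p_+<\infty$; the Lipschitz continuity of $\Lambda$ (together with $0<\alpha<1$) makes $p_0$ regular enough for $L^*p_0=0$ to hold as a pointwise identity. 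Writing $p_0^\eps(x)=p_0(x/\eps)$, rescaling turns $L^*p_0=0$ into
\begin{align*}
\int_{\R^d}\frac{\Lambda^\eps(y,x)\,p_0(y/\eps)-\Lambda^\eps(x,y)\,p_0(x/\eps)}{|x-y|^{d+\alpha}}\,dy=0 \qquad (x\in\R^d)\,.
\end{align*}
Feeding $g=p_0^\eps\phi$, $\phi\in C_0^\infty(\R^d)$, into the adjoint formula for $(L^\eps)^*$, the term in which $\phi$ is frozen at $x$ is precisely the left-hand side above and hence vanishes, leaving the crucial identity
\begin{align*}
\langle L^\eps u,\,p_0^\eps\phi\rangle=\int_{\R^d}\int_{\R^d}\frac{u(x)\,\Lambda^\eps(y,x)\,p_0(y/\eps)\,\big(\phi(y)-\phi(x)\big)}{|x-y|^{d+\alpha}}\,dy\,dx\,.
\end{align*}

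For the a priori bounds I test $(L^\eps-m)u^\eps=f$ with $p_0^\eps u^\eps$; symmetrising in $x\leftrightarrow y$ and using the same invariant-measure identity to cancel the antisymmetric part yields
\begin{align*}
-\langle L^\eps u^\eps,\,p_0^\eps u^\eps\rangle=\tfrac14\int_{\R^d}\int_{\R^d}\frac{\big(u^\eps(y)-u^\eps(x)\big)^2\big(\Lambda^\eps(x,y)p_0(x/\eps)+\Lambda^\eps(y,x)p_0(y/\eps)\big)}{|x-y|^{d+\alpha}}\,dx\,dy\ \ge\ c\,[u^\eps]_{H^{\alpha/2}(\R^d)}^2\,.
\end{align*}
Since $p_0^\eps\ge p_-$, the equation then gives $\|u^\eps\|_{L^2(\R^d)}\le C\|f\|_{L^2}$ and $[u^\eps]_{H^{\alpha/2}}\le C\|f\|_{L^2}$ uniformly in $\eps$; existence and uniqueness of $u^\eps$ for fixed $\eps$ follow from Lax--Milgram applied to the bounded, coercive form $(u,\psi)\mapsto-\langle L^\eps u,p_0^\eps\psi\rangle+m\langle u,p_0^\eps\psi\rangle$. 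The uniform bound produces, along a subsequence, $u^\eps\rightharpoonup u^0$ in $H^{\alpha/2}(\R^d)$ and $u^\eps\to u^0$ in $L^2_{\mathrm{loc}}$ by Rellich; a standard tail estimate using the zero-order term $m u^\eps$ upgrades this to strong convergence in $L^2(\R^d)$, which is needed because the oscillating coefficient below converges only weakly.

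Finally I pass to the limit in the weak form of $(L^\eps-m)u^\eps=f$ tested against $p_0^\eps\phi$, which by the crucial identity reads $\int_{\R^d} u^\eps K^\eps[\phi]\,dx-m\int_{\R^d} u^\eps p_0^\eps\phi\,dx=\int_{\R^d} f p_0^\eps\phi\,dx$ with $K^\eps[\phi](x)=\int_{\R^d}\Lambda^\eps(y,x)p_0(y/\eps)(\phi(y)-\phi(x))\,|x-y|^{-d-\alpha}\,dy$. Because $0<\alpha<1$ and $\phi\in C_0^\infty$, $K^\eps[\phi]$ is bounded in $L^2(\R^d)$; and since the bounded periodic coefficient $(x,y)\mapsto\Lambda(y/\eps,x/\eps)p_0(y/\eps)$ converges weakly, tested against $L^1$ functions, to its joint cell average $\int_{\T^d}\int_{\T^d}\Lambda(\eta,\xi)p_0(\eta)\,d\xi\,d\eta=\langle\Lambda p_0\rangle$, one gets $K^\eps[\phi]\rightharpoonup\langle\Lambda p_0\rangle\,\big(x\mapsto\int_{\R^d}(\phi(y)-\phi(x))|x-y|^{-d-\alpha}\,dy\big)$ weakly in $L^2(\R^d)$. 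Combining this with $u^\eps\to u^0$ strongly in $L^2$ and $p_0^\eps\rightharpoonup\langle p_0\rangle$, the equation passes to the limit:
\begin{align*}
\langle\Lambda p_0\rangle\int_{\R^d}\int_{\R^d}\frac{u^0(x)\big(\phi(y)-\phi(x)\big)}{|x-y|^{d+\alpha}}\,dy\,dx-m\langle p_0\rangle\int_{\R^d}u^0\phi\,dx=\langle p_0\rangle\int_{\R^d}f\phi\,dx\,,
\end{align*}
which, divided by $\langle p_0\rangle$, is the weak formulation of $(L^0-m)u^0=f$ with $\Lambda^{\mathrm{eff}}=\langle p_0\rangle^{-1}\langle\Lambda p_0\rangle$. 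Since this limit equation has a unique solution in $H^{\alpha/2}(\R^d)$ for $m>0$, the whole family converges, and the uniform bound gives the asserted weak convergence in $H^{\alpha/2}(\R^d)$.

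The genuinely new step, and the main obstacle, is the first one: noticing that multiplication by the oscillating density $p_0^\eps$ both symmetrises the energy (needed for the coercive a priori estimate) and, via $L^*p_0=0$, annihilates the singular drift which the non-symmetry of $\Lambda$ would otherwise generate, so that neither a corrector nor an effective drift survives in the limit. The accompanying spectral input --- existence, two-sided bounds and sufficient regularity of $p_0$, which is exactly where $0<\alpha<1$ and the Lipschitz hypothesis on $\Lambda$ are used --- is the other delicate ingredient; everything after it is a routine weak/strong compactness argument.
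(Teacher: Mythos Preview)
Your proof is correct and follows essentially the same route as the paper: the paper also obtains $p_0$ via Krein--Rutman (\autoref{theo:nonsym-eigenvalue}), tests \eqref{eq:ori_eqn} against $p_0^\eps u^\eps$ and uses $L^*p_0=0$ to symmetrise the energy for the uniform $H^{\alpha/2}$ bound (\autoref{p_nonsym1}), and then tests against $p_0^\eps\varphi$, deriving exactly your ``crucial identity'' to reduce the limit passage to averaging the bounded periodic weight $\Lambda(\cdot/\eps,\cdot/\eps)p_0(\cdot/\eps)$. The only organisational differences are that the paper passes to the limit using $L^2_{\mathrm{loc}}$ convergence together with a near-diagonal/far-diagonal splitting (rather than weak $L^2$ convergence of your $K^\eps[\phi]$) and establishes strong $L^2(\R^d)$ convergence afterwards by the lower-semicontinuity argument from the second proof of \autoref{theo:symm-kernels} rather than by a tail estimate.
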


\medskip

Let us discuss related articles that deal with homogenization problems for
L\'{e}vy-type operators resp. jump processes. We do not mention the early
fundamental works on homogenization of diffusion-type (differential) operators.
The interested reader is referred to the monographs \cite{JKO94, MR1765047,
MR1798387, braides2005gamma, MR2337848, MR2582099}.

A probabilistic approach to the homogenization problem for
nonlocal operators in non-divergence form is developed in
\cite{MR0488335}, \cite{MR1154844} and in \cite{MR1784743}. An approach based
on PDE methods and viscosity solutions can be found in \cite{MR2560294,
MR2981018}. The PDE method has also been extended to several classes of
nonlinear problems, see \cite{MR3194684, MR2733264, MR3009077}. All these
approaches, like ours,  deal with approximations of the same differentiability
order as the limit operator resp. limit equation. Since one can approximate
diffusions through much simpler objects such as random walks or Markov chains,
it is not surprising that there are also homogenization models for jump
processes that generate a diffusion in the limit, see \cite{San16} or
\cite{PZ2017},

An annealed convergence result for jump processes in random media is contained
in \cite[Theorem 5.3]{MR2565557}. As in our quenched result, no corrector
appears. Convergence in law of jump processes with periodic jump
intensities is also studied in \cite{MR2362589}.   \cite{MR2232732, MR2331266} focus
on homogenization of processes with variable order. Aperiodic fractional obstacle problems are
studied in \cite{MR2729014}.

 The recent papers \cite{MR3668594, BGG18} address problems which,  to a certain extend,
are related to the problems that we consider in the present
work. In these papers the authors focuses on the problem of $H$-compactness of a family of uniformly
elliptic non-local operators and describe a possible structure of any limit point of this family.
Our goal is to show that
for the operators with (locally) periodic and statistically homogeneous
coefficients the whole family of the rescaled operators $G$-converges
and to compute the coefficients of the effective  nonlocal operators. The results of
\cite{MR3668594,
BGG18} imply that in our case there is a non-trivial set of the limit operators with known ellipticity
bounds but leave open the question of their precise shape. Furthermore, we also provide a
quenched convergence result for
random kernels and we treat some non-symmetric cases. Last, apart from the
Gamma-convergence techniques, our proofs are rather different.

\medskip

The organization of the article is simple. \autoref{sec:proof-sym-kernels}
contains the proofs of \autoref{theo:symm-kernels} and \autoref{theo:nonlin}.
We treat the cases {\POne}, {\PTwo} resp.
{\QOne}, {\QTwo} in separate subsections because the product
structure of the kernels allows for a very short proof.
In \autoref{subsec:proof-nonlin} we explain how to prove \autoref{theo:nonlin}.
\autoref{sec:sym_rand} and \autoref{sec:nonsym} contain the proofs of
\autoref{theo:rand-kernels} and
\autoref{theo:nonsym} respectively.

\section{Symmetric resp. symmetrizable periodic
coefficients} \label{sec:proof-sym-kernels}

In this section we provide the proof of \autoref{theo:symm-kernels}. We provide
two different proofs, one for Case {\POne} and a separate one for Case
{\PTwo}. Both proofs can be adapted for the remaining case respectively but,
since the effective equation has a special
form under {\POne} and some proofs are shorter, we decide to look at
this case separately. Let us start with some general observations.

For $0 < \alpha < 2$ we consider  L\'{e}vy-type operators $\Lambda^\eps$ of
the form \eqref{eq:def-Leps}, where $\eps>0$ is a small positive
parameter. Our assumptions in Case {\POne} and Case {\PTwo} guarantee that
$\Lambda^\eps$ satisfies
\begin{align}\label{eq:elli}
\gamma^{-1}\leq \Lambda^\eps(x,y)\leq \gamma \qquad (x, y \in \R^d)
\end{align}
for some $\gamma > 1$ that does not depend on $\eps$. Condition \eqref{eq:elli}
can be seen as an ellipticity condition. As explained below, it guarantees
coercivity of the corresponding bilinear form in Sobolev spaces of fractional
order.

For each $\eps>0$ the
operator $L^\eps$ is symmetric on
$C_0^\infty(\mathbb R^d)$ in the weighted space $L^2(\mathbb R^d,\nu^\eps)$
where $\nu^\eps(x)=\nu(x/\eps)$  and $\nu(z)$ equals $\mu(z)/\lambda(z)$ in
Case {\POne} and $\nu(z)$ equals $1$ in Case {\PTwo}. Moreover, the
quadratic form $(-L^\eps u,v)_{L^2(\mathbb R^d,\nu^\eps)}$ is positive
on $C_0^\infty$. Indeed, in the case {\POne} for $u,\,v\in C_0^\infty(\mathbb
R^d)$ we have
\begin{align*}
(L^\eps u,v)_{L^2(\mathbb R^d,\nu^\eps)}&=\int\limits_{\mathbb
R^d}\int\limits_{\mathbb R^d}
\frac{\mu^\eps(x)\mu^\eps(y)}{|x-y|^{d+\alpha}}
\big(u(y)-u(x)\big)v(x)dydx \\
&=\int\limits_{\mathbb R^d}\int\limits_{\mathbb
R^d}
\frac{\mu^\eps(x)\mu^\eps(y)}{|x-y|^{d+\alpha}}
\big(u(y)v(x)-u(y)v(y)\big)dydx \\
&=\int\limits_{\mathbb
R^d}\int\limits_{\mathbb R^d}
\frac{\mu^\eps(x)\mu^\eps(y)}{|x-y|^{d+\alpha}}\big(v(x)-v(y)\big)u(y)dxdy=
(L^\eps v,u)_{L^2(\mathbb R^d,\nu^\eps)} \,.
\end{align*}
Here and in the sequel we denote $\mu^\eps(x)=\mu(x/\eps)$. In the
case {\PTwo} the symmetry can be checked in the same way.

The inequality $(L^\eps u,u)_{L^2(\mathbb R^d,\nu^\eps)}\leq 0$ follows from
the relation
$$
(L^\eps u,u)_{L^2(\mathbb R^d,\nu^\eps)}=-\frac12 \int\limits_{\mathbb
R^d}\int\limits_{\mathbb R^d}
\frac{\Lambda^\eps(x,y)}{|x-y|^{d+\alpha}}\big(u(y)-u(x)\big)^2\nu^\eps(x)dydx.
$$
The quadratic form  $(L^\eps u,v)_{L^2(\mathbb R^d,\nu^\eps)}$ with the domain
$H^{\alpha/2}(\mathbb R^d)$ is closed.
This follows from the fact that this quadratic form is comparable to
the quadratic form $(\Delta^{\alpha/2}u,v)$,
where $\Delta^{\alpha/2}=-(-\Delta)^{\alpha/2}$  is the fractional Laplacian.
The closedness of the last form is well-known. For the unique self-adjoint
operator
corresponding to this quadratic form (see \cite [Theorem X.23]{ReSi75}) we keep
the notation $L^\eps$, its domain is denoted $\mathcal{D}(L^\eps)$.   This
operator is self-adjoint and negative in the weighted space $L^2(\mathbb
R^d,\nu^\eps)$.

For a given constant
$m>0$ consider the resolvent $(m-L^\eps)^{-1}$. Since
$L^\eps$ is negative and self-adjoint in $L^2(\mathbb R^d,\nu^\eps)$, we have

\begin{align*}
\|(m-L^\eps)^{-1}\|_{\mathcal{L}(L^2(\mathbb R^d,\nu^\eps),L^2(\mathbb
R^d,\nu^\eps))}\leq \frac1m.
\end{align*}

In view of the properties of $\lambda$ and $\mu$ this yields

\begin{align}\label{eq:oc_res}
\|(m-L^\eps)^{-1}\|_{\mathcal{L}(L^2(\mathbb R^d),L^2(\mathbb R^d))}\leq
\frac{\gamma^2}m.
\end{align}

For a given $f\in L^2(\mathbb R^d)$ consider a sequence $(u^\eps)$ of
solutions to  equation \eqref{eq:ori_eqn}. Due to \eqref{eq:oc_res} for each
$\eps>0$ this equation has a unique solution, moreover $\|u^\eps\|_{L^2(\mathbb
R^s)} \leq \frac{\gamma^2}m \|f\|_{L^2(\mathbb R^s)}$.

As mentioned above, we provide
two proofs of \autoref{theo:symm-kernels}. In \autoref{subsec:proof-gamma-conv}
we provide a proof based on $\Gamma$-convergence. This proof is carried out
assuming {\POne}. Second, we assume {\PTwo} and prove
\autoref{theo:symm-kernels}
using compactness arguments in \autoref{subsec:2ndproof}. Note that either
proof works well in any of our
cases.

\subsection{First proof of \autoref{theo:symm-kernels}}
\label{subsec:proof-gamma-conv}

Assuming {\POne} we provide a proof of the Theorem based on
$\Gamma$-convergence. Consider
the functional
$$
F^\eps(u)=-(L^\eps u,u)_{L^2(\mathbb R^d,\nu^\eps)}+m(u,u)_{L^2(\mathbb
R^d,\nu^\eps)}-2
(f,u)_{L^2(\mathbb R^d,\nu^\eps)}
$$
for $u\in H^{\alpha/2}(\mathbb R^d)$. We extend this functional to the whole
$L^2(\mathbb R^d)$
letting $F^\eps(u)=+\infty$ for $u\in L^2(\mathbb R^d)\setminus
H^{\alpha/2}(\mathbb R^d)$.

It is straightforward to check that for each $\eps>0$ the functional $F^\eps$
is
continuous on
$H^{\alpha/2}(\mathbb R^d)$ and strictly convex. Thus, it attains its
minimum at a unique point. We denote this point by $u^\eps$.  It is
straightforward to see that $u^\eps$ belongs to $\mathcal{D}(L^\eps)$
and that $u^\eps$ is a solution of equation \eqref{eq:ori_eqn}.

We denote $L^2_w(\mathbb R^d)$ the space of square integrable functions
equipped
with the topology
of weak convergence. Here is our main auxiliary result.

\begin{theorem}\label{theo:gamma}
The family of functionals $F^\eps$ $\Gamma$-converges with respect to the
$L_{\rm loc}^2(\mathbb R^d)
\cap L^2_w(\mathbb R^d)$ topology
to the functional defined by
$$
F^{\rm eff}(u)=\frac12\int\limits_{\mathbb R^d}\int\limits_{\mathbb
R^d}\bar\mu^2\frac{(u(y)-u(x))^2}{|x-y|^{d+\alpha}}dydx+
\overline{\mu/\lambda}\int\limits_{\mathbb R^d}\{m (u(x))^2-2f(x)u(x)\}dx,
$$
for $u\in H^{\alpha/2}(\mathbb R^d)$ and $F^{\rm eff}(u)=+\infty$ for $u\in
L^2(\mathbb R^d)\setminus H^{\alpha/2}(\mathbb R^2)$,
where
$$
\bar\mu= \int_{[0,1]^d} \mu(y)dy, \qquad \overline{\mu/\lambda}
= \int_{[0,1]^d} (\mu(y))/(\lambda(y))dy.
$$
\end{theorem}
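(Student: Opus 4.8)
The plan is to verify the two halves of the $\Gamma$-convergence definition separately, after rewriting $F^\eps$ so that the oscillating coefficient multiplies only $L^1$-integrable quantities. In Case $\POne$ one has $\Lambda^\eps(x,y)\,\nu^\eps(x)=\mu^\eps(x)\mu^\eps(y)$, so the identity for $(-L^\eps u,u)_{L^2(\R^d,\nu^\eps)}$ recorded above gives, for $u\in H^{\alpha/2}(\R^d)$,
$$
F^\eps(u)=Q^\eps(u)+\int_{\R^d}\nu^\eps(x)\big(m\,u(x)^2-2f(x)u(x)\big)\d x,\qquad
Q^\eps(u):=\tfrac12\int_{\R^d}\int_{\R^d}\frac{\mu^\eps(x)\mu^\eps(y)}{|x-y|^{d+\alpha}}\big(u(y)-u(x)\big)^2\d y\d x .
$$
The only homogenization inputs needed are that, since $\mu$ and $\mu/\lambda$ are bounded $1$-periodic, the Riemann--Lebesgue lemma yields $\mu^\eps(x)\mu^\eps(y)\overset{\ast}{\rightharpoonup}\bar\mu^{\,2}$ in $L^\infty(\R^{2d})$ (the mean of the $\mathbb Z^{2d}$-periodic function $(\xi,\eta)\mapsto\mu(\xi)\mu(\eta)$ being $\bar\mu^{\,2}$) and $\nu^\eps\overset{\ast}{\rightharpoonup}\overline{\mu/\lambda}$ in $L^\infty(\R^d)$.

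For the $\limsup$ (recovery) inequality I take the constant sequence $u^\eps\equiv u$. If $u\notin H^{\alpha/2}(\R^d)$ there is nothing to prove; if $u\in H^{\alpha/2}(\R^d)$ then $(x,y)\mapsto(u(y)-u(x))^2|x-y|^{-d-\alpha}$ lies in $L^1(\R^{2d})$ and $m u^2-2fu\in L^1(\R^d)$, so testing the two weak-$\ast$ convergences against these functions gives $F^\eps(u)\to F^{\rm eff}(u)$.

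For the $\liminf$ inequality, let $u^\eps\to u$ in $L^2_{\rm loc}(\R^d)\cap L^2_w(\R^d)$; we may assume $\ell:=\liminf_\eps F^\eps(u^\eps)<\infty$ and pass to a subsequence attaining it. Since weakly convergent sequences are bounded, $\sup_\eps\|u^\eps\|_{L^2}<\infty$; combined with $Q^\eps(u^\eps)\ge\tfrac12\gamma^{-2}[u^\eps]_{H^{\alpha/2}}^2$ and $\big|2\int\nu^\eps f u^\eps\big|\le C$, boundedness of $F^\eps(u^\eps)$ forces $\sup_\eps\|u^\eps\|_{H^{\alpha/2}}<\infty$, and with the compact embedding $H^{\alpha/2}(B_R)\hookrightarrow\hookrightarrow L^2(B_R)$ we obtain $u^\eps\rightharpoonup u$ weakly in $H^{\alpha/2}(\R^d)$, in particular $u\in H^{\alpha/2}(\R^d)$. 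For the lower-order term, strong $L^2_{\rm loc}$-convergence of $u^\eps$ gives $\nu^\eps u^\eps\rightharpoonup\overline{\mu/\lambda}\,u$ weakly in $L^2_{\rm loc}(\R^d)$, which with $f\in L^2(\R^d)$ and $\sup_\eps\|u^\eps\|_{L^2}<\infty$ yields $\int\nu^\eps f u^\eps\to\overline{\mu/\lambda}\int fu$, while exhausting $\R^d$ by balls gives $\liminf_\eps\int\nu^\eps m(u^\eps)^2\ge\overline{\mu/\lambda}\int m u^2$. For $Q^\eps$ the crucial observation is that away from the diagonal the finite difference is continuous under $L^2_{\rm loc}$-convergence: fixing $R,\delta>0$ and $A_{R,\delta}:=\{(x,y):|x|<R,\ |y|<R,\ |x-y|>\delta\}$, we have $(u^\eps(y)-u^\eps(x))^2\to(u(y)-u(x))^2$ in $L^1(A_{R,\delta})$, the kernel $|x-y|^{-d-\alpha}$ is bounded on $A_{R,\delta}$, and $\mu^\eps(x)\mu^\eps(y)\overset{\ast}{\rightharpoonup}\bar\mu^{\,2}$, whence
$$
\iint_{A_{R,\delta}}\frac{\mu^\eps(x)\mu^\eps(y)}{|x-y|^{d+\alpha}}\big(u^\eps(y)-u^\eps(x)\big)^2\d y\d x\ \longrightarrow\ \bar\mu^{\,2}\iint_{A_{R,\delta}}\frac{(u(y)-u(x))^2}{|x-y|^{d+\alpha}}\d y\d x .
$$
Since the integrand is nonnegative, $\liminf_\eps Q^\eps(u^\eps)\ge\tfrac12\bar\mu^{\,2}\iint_{A_{R,\delta}}(u(y)-u(x))^2|x-y|^{-d-\alpha}\d y\d x$; letting $R\to\infty$, $\delta\to0$ and using $u\in H^{\alpha/2}(\R^d)$ together with monotone convergence upgrades this to $\tfrac12\bar\mu^{\,2}\iint_{\R^{2d}}(u(y)-u(x))^2|x-y|^{-d-\alpha}\d y\d x$. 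Summing the two contributions gives $\ell\ge F^{\rm eff}(u)$, as required.

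The step I expect to need the most care is this last one — that the diagonal region contributes nothing in the limit. It works precisely because $0<\alpha<2$: the relevant object is a finite difference rather than a gradient, so it converges \emph{strongly} on $\{|x-y|>\delta\}$ (there is no corrector or energy-lowering oscillation, in contrast with second-order problems), and the a priori bound forces $u\in H^{\alpha/2}(\R^d)$, which makes the discarded diagonal part of the \emph{limit} energy vanish as $\delta\to0$. The remaining points — uniform control of $L^2$-tails at infinity, the compact fractional Sobolev embedding on balls, and the identity $\Lambda^\eps\nu^\eps=\mu^\eps(x)\mu^\eps(y)$ coming from the product structure — are routine; once \autoref{theo:gamma} is in place, the equicoercivity just established together with the fundamental theorem of $\Gamma$-convergence yields \autoref{theo:symm-kernels} in Case $\POne$. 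The same scheme works verbatim in Cases $\PTwo$, $\QOne$, $\QTwo$ once $\mu^\eps(x)\mu^\eps(y)$ is replaced by the appropriate oscillating kernel and Riemann--Lebesgue by the corresponding periodic resp. ergodic averaging statement.
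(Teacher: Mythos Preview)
Your proof is correct and follows essentially the same route as the paper's: the constant recovery sequence for the $\Gamma$-$\limsup$, and for the $\Gamma$-$\liminf$ the decomposition of $\R^d\times\R^d$ into a bounded region away from the diagonal (your $A_{R,\delta}$, the paper's $G_1^\delta$) where strong $L^2_{\rm loc}$ convergence of $u^\eps$ meets weak-$\ast$ convergence of $\mu^\eps(x)\mu^\eps(y)$, and a remainder discarded by nonnegativity. Your handling of the lower-order terms is a bit more explicit than the paper's (which simply declares \eqref{upp0} ``clear''), and the compact embedding you invoke is not actually needed since $L^2_{\rm loc}$ convergence is already part of the assumed topology, but these are presentational differences only.
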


\begin{proof}[Proof of \autoref{theo:gamma}]
We begin with the $\Gamma$-$\liminf$ inequality. Let $v\in H^{\alpha/2}(\mathbb
R^d)$ and assume that
a sequence $v^\eps\in L^2(\mathbb R^d)$ converges to $v$ in $L^2_{\rm
loc}(\mathbb R^d)
\cap  L^2_{w}(\mathbb R^d)$ topology.

Denote
$$
Q^\eps(v):=\left\{
\begin{array}{ll}
\displaystyle
\int\limits_{\mathbb R^d}\!\!\int\limits_{\mathbb
R^d}\mu^\eps(x)\mu^\eps(y)\frac{(v(y)-v(x))^2}{|x-y|^{d+\alpha}}dydx,\quad&
v\in H^{\alpha/2}(\mathbb R^d);\\[6mm]
+\infty, \quad&v\in L^2(\mathbb R^d)\setminus
H^{\alpha/2}(\mathbb R^d);
\end{array}
\right.
$$
 and
$$
Q^0(v):=\left\{
\begin{array}{ll}
\displaystyle
\int\limits_{\mathbb R^d}\!\!\int\limits_{\mathbb
R^d}\bar\mu^2\frac{(v(y)-v(x))^2}{|x-y|^{d+\alpha}}dydx,\quad&
v\in H^{\alpha/2}(\mathbb R^d);\\[6mm]
+\infty, \quad&v\in L^2(\mathbb R^d)\setminus
H^{\alpha/2}(\mathbb R^d).
\end{array}
\right.
$$
From the definition of $F^\eps$ and $F^0$ it easily follows that
$$
\begin{array}{c}
c(m,f)\big(\|v\|\big._{H^{\alpha/2}(\mathbb R^d)}-1\big)\leq F^\eps(v)
\leq C(m,f)\big(\|v\|\big._{H^{\alpha/2}(\mathbb R^d)}+1\big),\\
c(m,f)\big(\|v\|\big._{H^{\alpha/2}(\mathbb R^d)}-1\big)\leq F^0(v)
\leq C(m,f)\big(\|v\|\big._{H^{\alpha/2}(\mathbb R^d)}+1\big)
\end{array}
$$
with strictly positive constants $c(m,f)$ and $C(m,f)$ that do not depend on
$\eps$.

Assume first that $F^0(v)=+\infty$. Then the $\Gamma$-$\liminf$ inequality is
trivial.
Indeed,  in this case $\|v\|_{H^{\alpha/2}(\mathbb R^d)}=+\infty$, and,
therefore,
$\liminf\limits_{\eps\to0}\|v^\eps\|_{H^{\alpha/2}(\mathbb R^d)}=+\infty$ for
any sequence
$v^\eps\in L^2(\mathbb R^d)$ that converges to $v$ in $L^2_{\rm loc}(\mathbb
R^d)$.
This yields the desired $\Gamma$-$\liminf$ inequality.
Assume now that $F^0(v)< +\infty$. It is clear that
\begin{align}\label{upp0}
\begin{array}{r}\displaystyle
\liminf\limits_{\eps\to0}\{(v^\eps,v^\eps)_{L^2(\mathbb
R^d,\nu^\eps)}-2(v^\eps,f)_{L^2(\mathbb R^d,\nu^\eps)}\}\\
\displaystyle
\geq \overline{\mu/\lambda}\big((v,v)_{L^2(\mathbb R^d)}-2(v,f)_{L^2(\mathbb
R^d)}\big)
\end{array}
\end{align}
for any sequence $v^\eps\in L^2(\mathbb R^d)$ that converges to $v$ in $L^2_
{\rm loc}(\mathbb R^d)
\cap L^2_w(\mathbb R^d)$
topology. Therefore, it suffices to show that
\begin{align}\label{qepsconv}
 \liminf\limits_{\eps\to0}Q^\eps(v^\eps)\geq Q^0(v).
 \end{align}
To this end we divide the integration area into three subsets as follows
\begin{align}\label{divi_spa}
\mathbb R^d\times\mathbb R^d=G^\delta_1\cup G^\delta_2\cup G^\delta_3
\end{align}
with
\begin{align}\label{defn_g1}
G^\delta_1=\{(x,y)\,:\,|x-y|\geq\delta,\,|x|+|y|\leq \delta^{-1}\},
\end{align}
\begin{align}\label{defn_g23}
G^\delta_2=\{(x,y)\,:\,|x-y|\leq\delta,\,|x|+|y|\leq \delta^{-1}\},\quad
G^\delta_3=\{(x,y)\,:\,|x|+|y|\geq \delta^{-1}\}.
\end{align}
Since the integral
$$
\int\limits_{\mathbb R^d\times\mathbb R^d}
\frac{\big(v(y)-v(x)\big)^2}{|x-y|^{d+\alpha}}dydx
$$
converges, for any $\kappa>0$ there exists $\delta>0$ such that
\begin{align}\label{upp1}
\int\limits_{G^\delta_2\cup G^\delta_3}
\overline{\mu}^2 \frac{\big(v(y)-v(x)\big)^2}{|x-y|^{d+\alpha}}dydx\leq \kappa
\end{align}
Obviously,
\begin{align}\label{upp2}
\liminf\limits_{\eps\to0}\int\limits_{G^\delta_2\cup G^\delta_3}
\mu^\eps(y)\mu^\eps(x) \frac{\big(v(y)-v(x)\big)^2}{|x-y|^{d+\alpha}}dydx\geq 0.
\end{align}
In the domain $G^\delta_1$ we have
$$
0< c_1(\delta)\leq \frac{\mu^\eps(y)\mu^\eps(x)}{|x-y|^{d+\alpha}}\leq
C_1(\delta),
$$
and $v^\eps$ converges to $v$ in $L^2(G_1^\delta)$. Therefore, as $\eps\to0$,
$$
\int\limits_{G^\delta_1}\mu^\eps(y)\mu^\eps(x)\frac{(v^\eps(x))^2}{|x-y|^{
d+\alpha}}dydx
\longrightarrow
\int\limits_{G^\delta_1}\bar\mu^2\frac{(v(x))^2}{|x-y|^{d+\alpha}}dydx,
$$
and
$$
\int\limits_{G^\delta_1}\mu^\eps(y)\mu^\eps(x)\frac{v^\eps(y)v^\eps(x)}{|x-y|^{
d+\alpha}}dydx
\longrightarrow
\int\limits_{G^\delta_1}\bar\mu^2\frac{v(y)v(x)}{|x-y|^{d+\alpha}}dydx.
$$
This yields
\begin{align}\label{upp3}
\lim\limits_{\eps\to0}\int\limits_{G^\delta_1}
\mu^\eps(y)\mu^\eps(x)\frac{(v^\eps(y)-v^\eps(x))^2}{|x-y|^{d+\alpha}}dydx
=
\int\limits_{G^\delta_1}\bar\mu^2\frac{(v(y)-v(x))^2}{|x-y|^{d+\alpha}}dydx.
\end{align}
Combining \eqref{upp0}--\eqref{upp3} we conclude that
$$
\liminf\limits_{\eps\to0} F^\eps(v^\eps)\geq F^0(v)-\kappa.
$$
Since $\kappa$ is an arbitrary positive number, the desired $\Gamma$-$\liminf$
inequality
follows.

We turn to the $\Gamma$-$\limsup$ inequality. It suffices to set $v^\eps=v$. It
is straightforward to check that $F^\eps(v)\to F^0(v)$. This completes the
proof
of \autoref{theo:gamma}.
\end{proof}


We can finally provide the proof of our main result in Case {\POne}.

\begin{proof}[Proof of \autoref{theo:symm-kernels}]
As a consequence of \autoref{theo:gamma} any limit point of $\{u^\eps\}$ is a
minimizer of $F^0$,
see  \cite[Theorem 1.21]{braides2005gamma}.
Since the minimizer of $F^0$ is unique, the whole family $\{u^\eps\}$
converges,
as $\eps\to0$, to
$u=\mathrm{argmin}F^0$ in $L^2_w(\mathbb R^d)\cap L^2_{\rm loc}(\mathbb R^d)$
topology; here the subindex $w$ indicates the weak topology.

It remains to show that $u^\eps$ converges to $u$ in $L^2(\mathbb R^d)$. If we
assume that $u^\eps$ does not converge to $u$ in $L^2(\mathbb R^d)$, then, for
a
subsequence, for any $n\in\mathbb Z$ there exists $\eps(n)>0$ such that for any
$\eps<\eps(n)$ we have
$$
\|u^\eps\|_{L^2(\mathbb R^d\setminus G(n))}\geq C_2,
$$
where $C_2>0$ is a constant that does not depend on $n$, and $G(n)$ stands for
the ball of radius $n$
centered at the origin. For sufficiently small $\eps$ this inequality
contradicts the fact that $u^\eps$ is a minimizer of $F^\eps$. Thus $u^\eps$
converges in $L^2 (\mathbb R^d)$.

The minimizer $u$ satisfies the equation
$$
\bar\mu^2\Delta^{\alpha/2}u-\overline{\mu/\lambda}mu=\overline{\mu/\lambda}f.
$$
Dividing it by $\overline{\mu/\lambda}$ we arrive at \eqref{eq:lim_eqn}.
\autoref{theo:symm-kernels} is proved. \end{proof}

\subsection{Second proof of \autoref{theo:symm-kernels}} \label{subsec:2ndproof}

In this section we give the second proof of   \autoref{theo:symm-kernels}. Here
we assume that condition {\PTwo} holds. This proof can be easily adapted to the
Case {\POne}.

\begin{proof}[Second proof of \autoref{theo:symm-kernels}]
Here we consider an operator $L^\eps$ of the form
\begin{align}\label{operat_fullsym}
L^\eps u(x)=\int\limits_{\mathbb R^d} \frac{\displaystyle
\Lambda\left(x,y,\frac
x\eps,\frac y\eps\right)
\big(u(y)-u(x)\big)}{|y-x|^{d+\alpha}}\:dy
\end{align}
with a continuous in $(x,y)$ and periodic measurable in $\zeta$ and $\eta$
function $\Lambda(x,y,\zeta,\eta)$ such that
$$
\Lambda(x,y,\zeta,\eta)=\Lambda(y,x,\eta,\zeta),\qquad \gamma^{-1}\leq
\Lambda(x,y,\zeta,\eta)
\leq\gamma.
$$
Our assumptions on the setup ensure that $\Lambda$ is a Carath\'{e}odory
function and $\Lambda^\eps$ is well defined.


As was explained above, $L^\eps$ is
a positive self-adjoint operator in
$L^2(\mathbb R^d)$
whose domain $D(L¨^ \eps)$ belongs to $H^{\alpha/2}(\mathbb R^d)$.

Multiplying the equation $-L^\eps u^\eps+ m u^\eps=f$ by $u^\eps$ and
integrating the resulting
relation over $\mathbb R^d$ we conclude
$$
\|u^\eps\|_{H^{\alpha/2}(\mathbb R^d)}\leq C
$$
with a constant $C$ that does not depend on $\eps$. Therefore, for a
subsequence,
$u^\eps$ converges to some function $u\in H^{\alpha/2}(\mathbb R^d)$, weakly in
$H^{\alpha/2}(\mathbb R^d)$
and strongly in $L_{\rm loc}^2(\mathbb R^d)$. In order to characterize
this limit function we multiply the equation  $-L^\eps u^\eps+ m u^\eps=f$ by a
test function
$\varphi\in C_0^\infty(\mathbb R^d)$ and integrate the obtained relation over
$\mathbb R^d$. After
simple rearrangements this yields
$$
\int\limits_{\mathbb R^d\times\mathbb
R^d}\frac{\Lambda^\eps(x,y)(u^\eps(y)-u^\eps(x))(\varphi(y)-\varphi(x))}{|x-y|^{
d+\alpha}}\,dxdy
+\int\limits_{\mathbb R^d}(u^\eps\varphi-f\varphi)dx=0,
$$
where $\Lambda^\eps(x,y)$ stands for $\Lambda\big(x,y,\frac x\eps,\frac
y\eps\big)$.
Clearly, the second integral converges to the integral $\int\limits_{\mathbb
R^d}(u\varphi-f\varphi)dx$.
Our goal is to pass to the limit in the first one. To this end we divide the
integration area $\mathbb R^d\times\mathbb R^d$ into three parts in the same
way
as it was done in \eqref{divi_spa}, \eqref{defn_g1} and \eqref{defn_g23}.
 The integral over
$G_2^\delta\cup G_3^\delta$ admits the following estimate
\begin{align*}
\bigg|\int\limits_{G_2^\delta\cup G_3^\delta} &
\frac{\Lambda^\eps(x,y)(u^\eps(y)-u^\eps(x))(\varphi(y)-\varphi(x))}{|x-y|^{
d+\alpha}}\,dxdy \bigg| \\
&\leq C\left(\int\limits_{G_2^\delta\cup
G_3^\delta}\frac{(u^\eps(y)-u^\eps(x))^2}
{|x-y|^{d+\alpha}}\,dxdy\right)^{\frac12}
\left(\int\limits_{G_2^\delta\cup G_3^\delta}\frac{(\varphi(y)-\varphi(x))^2}
{|x-y|^{d+\alpha}}\,dxdy\right)^{\frac12} \\
&\leq C_1\left(\int\limits_{G_2^\delta\cup
G_3^\delta}\frac{(\varphi(y)-\varphi(x))^2}
{|x-y|^{d+\alpha}}\,dxdy\right)^{\frac12}
\end{align*}
The last integral tends to zero, as $\delta\to 0$. Similarly,
$$
\bigg|\int\limits_{G_2^\delta\cup G_3^\delta}
\frac{\overline\Lambda(x,y)(u(y)-u(x))(\varphi(y)-\varphi(x))}{|x-y|^{
d+\alpha}}\,dxdy \bigg|
\,\longrightarrow\,0,
 $$
as $\delta\to0$. \\
According to \cite[Lemma 3.1]{Zhikov2004} the family $\Lambda^\eps$ converges
weakly in $L^2_{\rm loc}(\mathbb R^d\times\mathbb R^d)$
to the function  $\overline\Lambda$ with
$\overline\Lambda(x,y)=\Lambda^{\rm eff}(x,y)$.
Since
$u^\eps$ converges to $u$ in $L^2(G_1^\delta)$
and $\Lambda^\eps$ converges to $\overline\Lambda$ weakly on any bounded
domain, we conclude
\begin{align*}
\int\limits_{G_1^\delta} &
\frac{\Lambda^\eps(x,y)(u^\eps(y)-u^\eps(x))(\varphi(y)-\varphi(x))}{|x-y|^{
d+\alpha}}\,dxdy
\\
& \mathop{\longrightarrow}\limits_{\eps\to0}  \int\limits_{G_1^\delta}
\frac{\overline\Lambda(x,y)(u(y)-u(x))(\varphi(y)-\varphi(x))}{|x-y|^{d+\alpha}}
\,
dxdy \,.
\end{align*}

Combining the above relations, we arrive at the conclusion that
$$
\int\limits_{\mathbb R^d\times\mathbb R^d}
\frac{\overline\Lambda(x,y)(u(y)-u(x))(\varphi(y)-\varphi(x))}{|x-y|^{
d+\alpha}}\,dxdy
+\int\limits_{\mathbb R^d}(u\varphi-f\varphi)dx=0.
$$
Since $\varphi$ is an arbitrary $C_0^\infty$ function, this implies that $u$ is
a solution of the equation $-L^0u+mu=f$. Due to the uniqueness of a solution of
this equation, the whole family
$u^\eps$ converges to $u$, as $\eps\to0$.

It remains to justify the convergence in $L^2(\mathbb R^d)$. We have
$$
0\leq(-L^\eps(u^\eps-u),u^\eps-u)=-(L^\eps u^\eps,u^\eps)+2(L^\eps u^\eps,
u)-(L^\eps u,u).
$$
Passing to the limit yields
\begin{align}\label{bolshe}
\liminf\limits_{\eps\to0} \big\{-(L^\eps u^\eps,u^\eps)\big\}\geq -(L^0u,u).
\end{align}
Now the strong convergence of $u^\eps$ in $L^2(\mathbb R^d)$ can be obtained by
the standard lower semicontinuity arguments. Indeed, multiplying the equation
$-L^\eps u^\eps+ m u^\eps=f$
by $u^\eps$, integrating the resulting relation over $\mathbb R^d$ and passing
to the limit as $\eps\to0$ we have
$$
\lim\limits_{\eps\to0} \big((-L^\eps u^\eps,u^\eps)+ m
(u^\eps,u^\eps)\big)=(f,u).
 $$
If $u^\eps$ does not converge strongly in $L^2(\mathbb R^d)$ then for a
subsequence
$\lim\limits_{\eps\to0}m(u^\eps,u^\eps)>m(u,u)$. Combining this with
\eqref{bolshe}
for the same subsequence we obtain
 $$
\lim\limits_{\eps\to0} \big((-L^\eps u^\eps,u^\eps)+ m
(u^\eps,u^\eps)\big)>-(L^0u,u)+m(u,u)=(f,u).
 $$
The last relation here follows from the limit equation $-L^0 u+mu=f$. We arrive
at a contradiction. Thus
$u^\eps$ converges to $u$ in norm.
\end{proof}

\subsection{Proof of \autoref{theo:nonlin}} \label{subsec:proof-nonlin}

Let us comment on the proof of \autoref{theo:nonlin}. As mentioned above, the
proof does not require any new idea but just an adjustment of the setting. For
 every $\eps>0, m> 0$ the equation \eqref{eq:nonlin-eps-eq} posesses a
unique solution $u^\eps \in W^{\frac{\alpha}{p},p}(\mathbb
R^d)$. It minimizes the variational functional
\begin{align*}
 v \mapsto J(v) = & \frac1p \int\limits_{\mathbb R^d} \int\limits_{\mathbb
R^d}
\frac{|v(y)-v(x)|^p}{|x-y|^{d+
\alpha}}\Lambda^\eps(x,y)\,dydx + \frac{m}{p} |v|^{p} + \int\limits_{\R^d} f
v \,.
\end{align*}
In order to establish bounds that are uniform in $\eps$, we multiply
\eqref{eq:nonlin-eps-eq} by $u^\eps$, integrate the resulting relation over
$\mathbb R^d$, and exploit the equality
\begin{align*}
\int_{\mathbb R^d}\int_{\mathbb R^d}
\frac{|u(y)-u(x)|^{p-2}(u(y)-u(x)) u(x)}{|x-y|^{d+
\alpha}}\Lambda^\eps(x,y) \,dydx \\
=-\frac12\int_{\mathbb R^d}\int_{\mathbb R^d}
\frac{|u(y)-u(x)|^{p}}{|x-y|^{d+ \alpha}} \Lambda^\eps(x,y) \,dydx
\end{align*}
Then, we easily deduce the estimate
\begin{equation}\label{p-est}
  \|u^\eps\|_{W^{\frac{\alpha}{p},p}(\mathbb R^d)}\leq
C\|f\|_{L^{p'}(\mathbb R^d)}.
\end{equation}
with a constant $C$ that does not depend on $\eps$. Thus, there is a weakly
convergent subsequence and a limit $u^0$. From here, the proof is the same as that
of \autoref{theo:symm-kernels}.

\section{Symmetric random kernels} \label{sec:sym_rand}

Let us first explain the notion of a ergodic dynamical system. Let
$(\Omega,\mathcal{F},\mathbf{P})$ be a standard probability space and
assume that $(T_y)_{y\in \mathbb R^d}$,  is a $d$-dimensional ergodic
dynamical
system in this probability space, i.e., a collection of measurable maps
$T_y\,:\,\Omega
\mapsto\Omega$ such that
\begin{itemize}
  \item $T_{y_1}T_{y_2}=T_{y_1+y_2}$ for all $y_1$ and $y_2$ in $\mathbb R^d$;
\ $T_0=\mathrm{Id}$;
  \item $\mathbf{P}(T_y A)=\mathbf{P}(A)$ for all $A\in\mathcal{F}$ and all
$y\in\mathbb R^d$;
  \item $T_\cdot\,:\,\mathbb R^d\times\Omega\mapsto\Omega$ is a measurable map.
Here $\mathbb R^d\times\Omega$
  is equipped with the $\sigma$-algebra $\mathcal{B}\times\mathcal{F}$, where
$\mathcal{B}$ is the Borel $\sigma$-algebra
  in $\mathbb R^d$.
\end{itemize}
We say that $T_y$ is ergodic if for any $A\in\mathcal{F}$ such that $T_y A=A$
for all $y\in\mathbb R^d$ we have either
$\mathbf{P}(A)=0$ or $\mathbf{P}(A)=1$.

\medskip

Let us first make some remarks. We study the limit behaviour of operator
$L^\eps$ defined in \eqref{eq:def-Leps}, as
$\eps\to0$. Clearly, estimate \eqref{eq:oc_res} remains valid in the random
case. Therefore, for any given $f\in L^2(\mathbb R^d)$ the sequence of equations
\begin{align}\label{ori_eqn_rand}
(L^\eps-m)u^\eps=f
\end{align}
is well-posed. Moreover,  for any $\eps>0$ a solution $u^\eps$ is uniquely
defined, and   $\|u^\eps\|_{L^2(\mathbb R^s)}
\leq \frac{\gamma^2}m \|f\|_{L^2(\mathbb R^s)}$.

\subsection{First proof of
\autoref{theo:rand-kernels}} \label{subsec:proof-one-rand-kernels}

Now we are in the position to prove \autoref{theo:rand-kernels} in Case {\QOne}.

\begin{proof}[Proof of \autoref{theo:rand-kernels} in Case {\QOne}]
In the same way as in the proof of \autoref{theo:symm-kernels} for any $f\in
L^2(\mathbb
R^d)$ we obtain the estimate
$$
\|u^\eps\|_{H^{\alpha/2}(\mathbb R^d)}\leq C
$$
with a deterministic constant $C$ that does not depend on $\eps$.  Therefore,
for each $\omega\in\Omega$ there is a subsequence
that converges to a function $u^0\in H^{\alpha/2}(\mathbb R^d)$ weakly in
$H^{\alpha/2}(\mathbb R^d)$ and strongly
in $L^2_{\rm loc}(\mathbb R^d)$. Abusing slightly the notation we keep for this
subsequence the same name $u^\eps$.

Multiplying equation \eqref{ori_eqn_rand} by $\mu(\frac x\eps)(\lambda(\frac
x\eps))^{-1}\varphi(x)$ with
$\varphi\in C_0^\infty(\mathbb R^d)$ and integrating the resulting equality
over
$\mathbb R^d$ after simple rearrangements we arrive
 at the following relation:

\begin{align*}
0 = \int\limits_{\mathbb R^d\times\mathbb
R^d} &\!\!\!\!\frac{
\mu^\eps(y)\mu^\eps(x)(u^\eps(y)-u^\eps(x))(\varphi(y)-\varphi(x))}{|x-y|^{
d+\alpha}}\,dxdy \\
& + \int\limits_{\mathbb
R^d}\!\frac{\mu^\eps(x)}{\lambda^\eps(x)}(u^\eps\varphi-f\varphi)dx\,.
\end{align*}

Here, $\mu^\eps(x)$ and $\lambda^\eps(x)$ stand for  $\mu(\frac x\eps)$ and
$\lambda(\frac x\eps)$, respectively. By the Birkhoff ergodic theorem
$\mu^\eps(y)\mu^\eps(x)$ converges a.s., as $\eps\to0$,  to
$\big(\mathbf{E}\{\widehat\mu(\cdot)\}\big)^2$ weakly in
$L^2_{\rm loc}(\mathbb R^d\times\mathbb R^d)$.  Similarly,
$\frac{\mu^\eps(x)}{\lambda^\eps(x)}$ converges a.s. to
$\mathbf{E}\big\{\frac{\widehat\mu(\cdot)}{\widehat\lambda(\cdot)}\big\}$ in
$L^2_{\rm
loc}(\mathbb R^d)$.

Following the line of the second proof of \autoref{theo:symm-kernels} we obtain
\begin{align*}
0= \int\limits_{\mathbb R^d\times\mathbb
R^d} &\!\!\!\!\frac{\big(\mathbf{E}\{\widehat\mu(\cdot)\}
\big)^2(u^0(y)-u^0(x))(\varphi(y)-\varphi(x))}{|x-y|^{d+\alpha}}\,dxdy \\
& +\mathbf{E}\Big\{\frac{\widehat\mu(\cdot)}{\widehat\lambda(\cdot)}\Big\}
\int\limits_ {
\mathbb R^d}\!(u^0\varphi-f\varphi)dx \,.
\end{align*}

This yields the desired relation \eqref{eq:lim_eqn}.  The fact that the whole
family $\{u^\eps\}$ converges to $u^0$ a.s.
follows from the uniqueness of a solution of equation \eqref{eq:lim_eqn}.
Finally, the convergence
$\lim\limits_{\eps\to0}\|u^\eps-u^0\|_{L^2(\mathbb R^d)}=0$ can be justified in
the same way as in the second proof of \autoref{theo:symm-kernels}.
\end{proof}

\subsection{Second proof of
\autoref{theo:rand-kernels}} \label{subsec:proof-two-rand-kernels}

Next, we explain how to establish \autoref{theo:rand-kernels} in Case {\QTwo}.
The proof will follow in a straightforward way once we have established the
following auxiliary result.

\begin{lemma}\label{lem:mv}
For any bounded Lipschitz domain $Q\subset
\mathbb R^d\times\mathbb R^d$ we have a.s.
\begin{equation}\label{f_mv}
\int_Q\Lambda(x,y,T_{\frac x\eps}\omega,T_{\frac
y\eps}\omega)\,dxdy\longrightarrow
\int_Q\Lambda^{\rm eff}(x,y)\,dxdy, \qquad\hbox{as }\eps\to0,
\end{equation}
where
$$
\Lambda^{\rm eff}(x,y)=\int_\Omega\int_\Omega
\Lambda(x,y,\omega_1,\omega_2)d\mathbf{P}(\omega_1)d\mathbf{P}(\omega_2).
$$
\end{lemma}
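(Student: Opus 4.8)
The plan is to prove \autoref{lem:mv} by a two-scale ergodic averaging argument, reducing the statement about the product field $(x,y)\mapsto\Lambda(x,y,T_{x/\eps}\omega,T_{y/\eps}\omega)$ to the classical Birkhoff theorem on the product dynamical system $(T_{\xi_1}\otimes T_{\xi_2})_{(\xi_1,\xi_2)\in\R^d\times\R^d}$ acting on $\Omega\times\Omega$. The first step is to observe that, since $T_\cdot$ is an ergodic $d$-dimensional dynamical system on $\Omega$, the $2d$-dimensional dynamical system $S_{(\xi_1,\xi_2)}=(T_{\xi_1},T_{\xi_2})$ on the product space $(\Omega\times\Omega,\F\otimes\F,\PP\otimes\PP)$ is measure preserving; it need \emph{not} be ergodic, but Birkhoff's theorem still applies and gives a.s. convergence of the ergodic averages to the conditional expectation with respect to the $\sigma$-algebra of $S$-invariant sets. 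Because the diagonal action $(T_\xi,T_\xi)$ is ergodic and, more to the point, because we average over the \emph{full} $2d$-dimensional group $S$, the invariant $\sigma$-algebra is trivial (a standard fact: a product of an ergodic flow with itself, averaged over the product parameter group, has only trivial invariants since for any invariant set one fixes the second coordinate, uses ergodicity in the first, then symmetrizes), so the limit is the constant $\int_\Omega\int_\Omega\Lambda(x,y,\omega_1,\omega_2)\,d\PP(\omega_1)d\PP(\omega_2)=\Lambda^{\rm eff}(x,y)$.

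Next I would handle the dependence on the slow variables $(x,y)$. Here the compactness assumption in \QTwo\ is essential: $\Omega$ is a metric compact space, $T_x$ is continuous, and $\Lambda(x,y,\omega_1,\omega_2)$ is continuous on $\R^d\times\R^d\times\Omega\times\Omega$, hence uniformly continuous on $\overline Q\times\Omega\times\Omega$ for the bounded domain $Q$. This lets me approximate $\Lambda$ uniformly by a finite sum $\sum_k \chi_k(x,y)\,g_k(\omega_1,\omega_2)$, where $\chi_k$ are continuous cutoffs subordinate to a fine partition of $\overline Q$ (so $x,y$ are essentially frozen on $\mathrm{supp}\,\chi_k$) and $g_k\in C(\Omega\times\Omega)$; equivalently, by uniform continuity in $(x,y)$, freezing $(x,y)$ on small cubes of side $\rho$ changes the integrand by at most a modulus of continuity $\omega_\Lambda(\rho)$ uniformly in $\omega$. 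On each small cube, the integral $\int \Lambda(x_k,y_k,T_{x/\eps}\omega,T_{y/\eps}\omega)\,dxdy$ is exactly an ergodic average of the fixed function $\Lambda(x_k,y_k,\cdot,\cdot)\in L^1(\Omega\times\Omega)$ over a box in $\R^d\times\R^d$, which converges a.s. to $|\text{cube}|\cdot\Lambda^{\rm eff}(x_k,y_k)$ by the multidimensional Birkhoff theorem quoted above. Summing over the finitely many cubes, using a countable dense set of cube-centers and a diagonal argument to get a single a.s. event, and then letting $\rho\to0$ using uniform continuity of both $\Lambda$ and (by compactness and continuity of $T$) of $\Lambda^{\rm eff}$, yields \eqref{f_mv}; the passage from cubes to a general bounded Lipschitz $Q$ is routine since $\partial Q$ has measure zero and the integrands are uniformly bounded by $\gamma$.

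The main obstacle — and the only genuinely non-routine point — is the ergodicity bookkeeping for the product system: one must be careful that $S_{(\xi_1,\xi_2)}=(T_{\xi_1},T_{\xi_2})$ averaged over $(\xi_1,\xi_2)\in\R^d\times\R^d$ does have trivial invariant $\sigma$-algebra even though $(T_\xi\otimes T_\xi)_{\xi\in\R^d}$ (the diagonal subgroup) is \emph{not} in general ergodic on $\Omega\times\Omega$. The clean way to see this is: if $A\subset\Omega\times\Omega$ is $S$-invariant, then for a.e.\ fixed $\omega_2$ the section $A_{\omega_2}=\{\omega_1:(\omega_1,\omega_2)\in A\}$ is invariant under $(T_{\xi_1})_{\xi_1}$, hence has $\PP$-measure $0$ or $1$; the function $\omega_2\mapsto\PP(A_{\omega_2})\in\{0,1\}$ is measurable and invariant under $(T_{\xi_2})_{\xi_2}$, hence constant a.s.; so $(\PP\otimes\PP)(A)\in\{0,1\}$. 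Once this is in place, Birkhoff gives convergence to the space average, i.e.\ to $\Lambda^{\rm eff}$, and the rest of the argument is the soft approximation described above; the lemma then feeds directly into the compactness proof of \autoref{theo:rand-kernels} in Case \QTwo\ exactly as $\Lambda^\eps\rightharpoonup\overline\Lambda$ was used in the second proof of \autoref{theo:symm-kernels}.
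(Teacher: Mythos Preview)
Your treatment of the slow variables $(x,y)$ --- freezing them on a fine partition of $\overline Q$ using uniform continuity on the compact $\overline Q\times\Omega\times\Omega$ --- is essentially the same as the paper's, and your argument that the product flow $S_{(\xi_1,\xi_2)}=(T_{\xi_1},T_{\xi_2})$ on $(\Omega\times\Omega,\PP\otimes\PP)$ is ergodic is correct. The gap is at the next step: after freezing $(x,y)=(x_k,y_k)$ you write the cube integral as a Birkhoff average of $g:=\Lambda(x_k,y_k,\cdot,\cdot)\in L^1(\Omega\times\Omega)$ along the $S$-orbit of the \emph{diagonal} point $(\omega,\omega)$, and then invoke the multidimensional Birkhoff theorem. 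But Birkhoff on the product system only gives convergence for $(\PP\otimes\PP)$-a.e.\ starting point $(\omega_1,\omega_2)\in\Omega\times\Omega$, and the diagonal $\{(\omega,\omega):\omega\in\Omega\}$ is a $(\PP\otimes\PP)$-null set whenever $\PP$ is non-atomic. Your ergodicity argument for $S$ therefore does not, by itself, yield the statement you need, which is convergence for $\PP$-a.e.\ $\omega$ at the point $(\omega,\omega)$.

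The paper sidesteps exactly this issue by a further decomposition in the $(\omega_1,\omega_2)$ variables that you omit: after freezing $(x,y)=(x_j,y_j)$ it applies the Stone--Weierstrass theorem on the compact $\Omega\times\Omega$ to approximate $\Lambda(x_j,y_j,\omega_1,\omega_2)$ uniformly by a finite sum of tensor products $\sum_{k}\varphi_k^j(\omega_1)\psi_k^j(\omega_2)$ with $\varphi_k^j,\psi_k^j\in C(\Omega)$. For each such product the double integral factorises,
\[
\int_{B_j}\varphi_k^j(T_{x/\eps}\omega)\,\psi_k^j(T_{y/\eps}\omega)\,dxdy
\]
splits into two ordinary $d$-dimensional Birkhoff averages on the \emph{single} space $\Omega$, each of which converges $\PP$-a.s.\ to $\E\varphi_k^j$ resp.\ $\E\psi_k^j$. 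One countable family of continuous functions on $\Omega$ handles all $(x_j,y_j)$ simultaneously, so a single $\PP$-full-measure event suffices. Your approximation $\sum_k\chi_k(x,y)g_k(\omega_1,\omega_2)$ separates only the slow and fast variables; you still need the additional tensor-product splitting of $g_k$ (or an equivalent device) to get from $(\PP\otimes\PP)$-a.e.\ to $\PP$-a.e.\ on the diagonal.
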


\begin{proof}
Notice first that under the assumptions of the lemma the function $\Lambda^{\rm
eff}(x,y)$ is continuous
on $\overline Q$.\\
Since $\overline Q\times\Omega\times\Omega$ is compact, for any $\delta>0$ there
exists $\varkappa>0$
such that
$$
|\Lambda(x',y',\omega_1,\omega_2)-\Lambda(x'',y'',\omega_1,\omega_2)|\leq\delta
\quad\hbox{for all }\omega_1,\ \omega_2,
$$
if $|(x',y')-(x'',y'')|\leq\varkappa$.  \\
Consider a partition $\{B_j\}_{j=1}^{N(\delta)}$ of $Q$ that has the following
properties:
\begin{itemize}
\item[(i)]  $\overline Q=\bigcup\overline{B}_j$,\ \ \ \ $B_j\cap B_k=\emptyset$
if $j\not=k$.
\item[(ii)] $\mathrm{diam}(B_j)\leq \varkappa$.
\item[(iii)] The inequality holds
$$
\Big|\int_Q\Lambda^{\rm eff}(x,y)\,dxdy-\sum\limits_{j=1}^N \Lambda^{\rm
eff}(x_j,y_j)|B_j|\Big|\leq\delta
$$
where $\{(x_j,y_j)\}_{j=1}^N$ is a set of points in $Q$ such that $(x_j,y_j)\in
B_j$.
\end{itemize}
By the Stone-Weierstrass theorem for each $j=1,\ldots,N$ there exist a finite
set of continuous functions
$\{\varphi_k^j(\omega),\,\psi_k^j(\omega)\}_{k=1}^L$ such that
$$
\Big|\Lambda(x_j,y_j,\omega_1,\omega_2)-\sum\limits_{k=1}
^L\varphi_k^j(\omega_1)\psi_k^j(\omega_2)\Big|\leq
\delta.
$$
This implies in particular that
\begin{equation}\label{f_ave}
\Big|\Lambda^{\rm
eff}(x_j,y_j)-\sum\limits_{k=1}^L\mathbf{E}\varphi_k^j\mathbf{E}
\psi_k^j\Big|\leq\delta.
\end{equation}
Then we have a.s.

\begin{align*}
\limsup\limits_{\eps\to0} & \int\limits_Q\Lambda(x,y,T_{\frac
x\eps}\omega,T_{\frac y\eps}\omega)\,dxdy \\
& \leq\limsup\limits_{\eps\to0}\sum_{j=1}^N\int\limits_{B_j}
\Lambda(x_j,y_j,T_{\frac x\eps}\omega,T_{\frac y\eps}\omega)\,dxdy+\delta|Q| \\
&\leq\limsup\limits_{\eps\to0}\sum_{j=1}^N\int\limits_{B_j}\sum\limits_{k=1}^L
\varphi_k^j(T_{\frac x\eps}\omega)\psi_k^j(T_{\frac
y\eps}\omega)\,dxdy+2\delta|Q| \\
&= \sum_{j=1}^N\int\limits_{B_j}\sum\limits_{k=1}^L\mathbf{E}\varphi_k^j\mathbf{
E } \psi_k^j\,dxdy+2\delta|Q| \\
& \leq\sum_{j=1}^N\int\limits_{B_j}
\Lambda^{\rm eff}(x_j,y_j)\,dxdy+3\delta|Q|\leq \int\limits_{Q}
\Lambda^{\rm eff}(x,y)\,dxdy+\delta(3|Q|+1)\,.
\end{align*}

The third relation here follows from the Birkhoff ergodic theorem, and the
fourth one from estimate \eqref{f_ave}. Similarly,
$$
\liminf\limits_{\eps\to0}\int\limits_Q\Lambda(x,y,T_{\frac x\eps}\omega,T_{\frac
y\eps}\omega)\,dxdy
\geq \int\limits_{Q}\Lambda^{\rm eff}(x,y)\,dxdy-\delta(3|Q|+1).
$$
Since $\delta > 0$ can be chosen arbitrarily, this implies the desired relation
\eqref{f_mv}.
\end{proof}

With the help of this convergence result, the proof of
\autoref{theo:rand-kernels} is immediate.

\section{Non-symmetric kernels} \label{sec:nonsym}

The aim of this section of to prove \autoref{theo:nonsym}. We split the proof
into three different steps. In \autoref{subsec:aux_nonsymm} we investigate the
adjoint operator $L^*$ and its principal eigenfunction.
\autoref{subsec:apriori_nonsymm} provides uniform bounds on the functions
$u^\eps$. Finally, we consider the limit $\eps \to 0$ in
\autoref{subsec:limit_nonsymm}.

\subsection{Auxiliary periodic problems} \label{subsec:aux_nonsymm}


Without loss of generality we suppose that the period of $\Lambda$ in each
variable is $[0,1]^d$.
We deal here with an auxiliary (cell) problem defined in  the space of periodic
functions $L^2(\T^d)$. Notice that in this case the operator
$$
Lv(\zeta)=\int\limits_{\mathbb
R^d}\frac{\Lambda(\zeta,\eta)\big(v(\eta)-v(\zeta)\big)}
{|\zeta-\eta|^{d+\alpha}}\,d\eta
$$
is an unbounded linear operator in $L^2(\T^d)$; here and in what follows we identify periodic functions defined on the torus
$\mathbb T^d$ with the corresponding periodic functions in $\mathbb R^d$.  With a domain
$\mathcal{D}(L)=H^\alpha(\T^d)$
this operator is closed and its adjoint also has a domain $H^\alpha(\mathbb
T^d)$.
Direct computations show that the adjoint operator takes the form
$$
L^*q(\zeta)=\int\limits_{\mathbb
R^d}\frac{\big(\Lambda(\eta,\zeta)q(\eta)-\Lambda(\zeta,\eta)q(\zeta)\big)}
{|\zeta-\eta|^{d+\alpha}}\,d\eta.
$$

\begin{theorem}\label{theo:nonsym-eigenvalue}
The kernel of operator $L^*$ in $L^2(\T^d)$ has dimension one. The
corresponding eigenfunction
$p_0(\xi)$ is continuous and, under proper normalization, positive. Moreover,
there exists a constant $p^->0$
such that $p_0(\xi)\geq p^-$ for all $\xi\in\T^d$.
\end{theorem}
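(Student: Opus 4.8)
The plan is to realise $L$ and $L^{*}$ as closed operators with compact resolvent on $L^{2}(\T^{d})$ and then to combine Fredholm theory with the Krein--Rutman theorem; the algebraic fact that drives everything is $L\mathbf{1}=0$, while the lower bound $\Lambda\ge\gamma^{-1}$ supplies the irreducibility needed for \emph{simple} eigenvalues with \emph{strictly positive} eigenfunctions. First I would record the decomposition $L^{*}q=\widetilde{L}q+b\,q$, where $\widetilde{L}q(\zeta)=\int_{\R^{d}}\Lambda(\eta,\zeta)\big(q(\eta)-q(\zeta)\big)|\zeta-\eta|^{-d-\alpha}\,d\eta$ is again a nonlocal operator with nonnegative kernel and $b(\zeta)=\int_{\R^{d}}\big(\Lambda(\eta,\zeta)-\Lambda(\zeta,\eta)\big)|\zeta-\eta|^{-d-\alpha}\,d\eta$; since $|\Lambda(\eta,\zeta)-\Lambda(\zeta,\eta)|\le C|\zeta-\eta|$ and $\alpha<1$, this integral converges absolutely and $b\in C(\T^{d})\cap L^{\infty}(\T^{d})$. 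As recalled just above, $L$ and $L^{*}$ are closed with domain $H^{\alpha}(\T^{d})$; the bilinear form of $L$ has a symmetric part comparable with the $H^{\alpha/2}(\T^{d})$ Gagliardo seminorm and an antisymmetric part of strictly lower order, so a G\aa rding inequality holds and for $\lambda$ large $\lambda-L\colon H^{\alpha}(\T^{d})\to L^{2}(\T^{d})$ is boundedly invertible with $T_{\lambda}:=(\lambda-L)^{-1}$ compact, because it factors through the compact embedding $H^{\alpha}(\T^{d})\hookrightarrow L^{2}(\T^{d})$. The same holds for $T_{\lambda}^{*}=(\lambda-L^{*})^{-1}$, and, having compact resolvent, $L$ is Fredholm of index zero, so $\dim\ker L=\dim\ker L^{*}$.

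Next I would verify that $T_{\lambda}$ is positivity preserving and strongly positive on $C(\T^{d})$: if $(\lambda-L)v=g\ge 0$, then (using that $v$ is continuous, by interior H\"{o}lder regularity for $\lambda v-Lv=g$ with bounded right-hand side) the maximum principle at a negative minimum point $\zeta_{0}$ of $v$ gives $v\ge 0$, and if $g\not\equiv 0$ the strong maximum principle — which uses that $\Lambda(\zeta_{0},\eta)|\zeta_{0}-\eta|^{-d-\alpha}>0$ for \emph{every} $\eta$ — gives $v>0$ everywhere. The same argument for $\lambda-L^{*}=\lambda-\widetilde{L}-b$ (with $\lambda>\|b\|_{L^{\infty}}$) shows that $T_{\lambda}^{*}$ is likewise compact, positivity preserving and strongly positive. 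By the Krein--Rutman theorem for strongly positive compact operators, the spectral radius $r_{\lambda}>0$ of $T_{\lambda}$ is a simple eigenvalue with a strictly positive eigenfunction and is the only eigenvalue admitting a nonnegative one; since $L\mathbf{1}=0$ forces $T_{\lambda}\mathbf{1}=\lambda^{-1}\mathbf{1}$ with $\mathbf{1}>0$, we must have $r_{\lambda}=\lambda^{-1}$, and as $T_{\lambda}v=\lambda^{-1}v\Leftrightarrow Lv=0$ this gives $\ker L=\R\mathbf{1}$. Applying the same to $T_{\lambda}^{*}$ (whose spectral radius is again $\lambda^{-1}$) produces a strictly positive $p_{0}$ with $T_{\lambda}^{*}p_{0}=\lambda^{-1}p_{0}$, i.e.\ $L^{*}p_{0}=0$; together with $\dim\ker L^{*}=\dim\ker L=1$ this yields $\ker L^{*}=\R p_{0}$ with $p_{0}>0$.

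It remains to upgrade $p_{0}$ to a continuous function. Since $p_{0}\in H^{\alpha}(\T^{d})$ solves $\widetilde{L}p_{0}+b\,p_{0}=0$ with $b\in L^{\infty}(\T^{d})$, nonlocal De Giorgi--Nash--Moser/Schauder-type estimates — available here because of the uniform ellipticity \eqref{eq:ellipticity} and the Lipschitz regularity of $\Lambda$ — give $p_{0}\in C^{\beta}(\T^{d})$ for some $\beta>0$, so $p_{0}$ is continuous; since $\T^{d}$ is compact and $p_{0}>0$ everywhere we may set $p^{-}:=\min_{\T^{d}}p_{0}>0$, and a normalisation such as $\int_{\T^{d}}p_{0}(\xi)\,d\xi=1$ completes the proof.

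The two genuinely delicate points are the strong positivity of the resolvent in the second step — this is precisely where the lower ellipticity bound is used, and it presupposes just enough regularity to run the strong maximum principle — and the continuity of $p_{0}$, which relies on invoking the correct interior regularity theory for the non-symmetric nonlocal equation. An alternative that avoids Krein--Rutman is probabilistic: $L$ generates a conservative Feller process on $\T^{d}$ which, by the minorisation $\Lambda\ge\gamma^{-1}$, satisfies a Doeblin condition (one jump of suitable size already spreads mass over all of $\T^{d}$ with a density bounded below, uniformly in the starting point), so it has a unique invariant probability measure; this gives $\dim\ker L^{*}=1$ directly, and the Doeblin lower bound immediately yields $p_{0}\ge p^{-}>0$.
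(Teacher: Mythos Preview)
Your proposal is correct and follows the same overall strategy as the paper: both arguments apply the Krein--Rutman theorem to the resolvent of $L$ (resp.\ $L^{*}$), exploit $L\mathbf{1}=0$ to identify the spectral radius as $\lambda^{-1}$, and use the strong maximum principle (driven by $\Lambda\ge\gamma^{-1}$) for strict positivity. The execution differs in a few places worth recording. The paper decomposes $-L^{*}$ as $\Lambda(\zeta,\zeta)(\mathcal{L}^{s}+\mathcal{L}^{1}+\mathcal{L}^{2})$, factoring out the diagonal so that $\mathcal{L}^{s}$ is \emph{exactly} the fractional Laplacian; it then proves, via heat-semigroup gradient bounds, that $(\mathcal{L}^{s}+\lambda)^{-1}:C(\T^{d})\to C^{\beta}(\T^{d})$ is bounded and transfers this to $(-L^{*}+\lambda)^{-1}$ by perturbation, so that Krein--Rutman is applied directly in $C(\T^{d})$. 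Your simpler splitting $L^{*}=\widetilde{L}+b$ is cleaner, but you pay for it by invoking nonlocal De~Giorgi--Nash--Moser/Schauder regularity as a black box, which the paper avoids. One point to tighten: you establish compactness of $T_{\lambda}$ in $L^{2}(\T^{d})$ via $H^{\alpha}\hookrightarrow L^{2}$, but the strong form of Krein--Rutman you quote needs a cone with nonempty interior, i.e.\ compactness in $C(\T^{d})$; this follows from the H\"{o}lder regularity you cite (Arzel\`{a}--Ascoli), but it should be stated rather than left implicit. Finally, your Fredholm-index argument for $\dim\ker L=\dim\ker L^{*}$ is a shortcut the paper does not take (it appeals instead to the uniqueness of the Krein--Rutman eigenmeasure), and the Doeblin-condition alternative you sketch is a genuinely different, self-contained route not mentioned in the paper.
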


The remainder of this subsection is dedicated to the proof of
\autoref{theo:nonsym-eigenvalue}, which itself uses several auxiliary results.

\begin{proof}[Proof of \autoref{theo:nonsym-eigenvalue}]
First we are going to show that the kernel of $L^*$ in $L^2(\T^d)$
contains a continuous positive function,
we denote it $p_0$. The uniqueness will be justified later on.
To prove the existence of such a function  $p_0$ we check that the Krein-Rutman
theorem applies to the resolvent
of the operators $L$ and $L^*$.

We represent the operator $L^*$ in the form
$$
-L^*q(\zeta)= \Lambda(\zeta,\zeta)\int\limits_{\mathbb
R^d}\frac{\big(q(\zeta)-q(\eta)\big)}
{|\zeta-\eta|^{d+\alpha}}\,d\eta+\int\limits_{\mathbb
R^d}\frac{\big(\Lambda(\zeta,\zeta)-\Lambda(\eta,\zeta)\big)q(\eta)}
{|\zeta-\eta|^{d+\alpha}}\,d\eta
$$
$$
+q(\zeta)\int\limits_{\mathbb
R^d}\frac{\big(\Lambda(\zeta,\zeta)-\Lambda(\zeta,\eta)\big)}
{|\zeta-\eta|^{d+\alpha}}\,d\eta =:
\Lambda(\zeta,\zeta)\big[\mathcal{L}^sq(\zeta)+ \mathcal{L}^1q(\zeta)+
\mathcal{L}^2q(\zeta)\big]
$$
Since $\Lambda(\zeta,\eta)$ is a Lipschitz continuous function and $\alpha\in(0,1)$, the kernel of
the operator  $\mathcal{L}^1$  is integrable on
$\mathbb R^d\times \T^d$. Considering the fact that this kernel is
continuous on the complement of the set $\{(\zeta,\eta)\,:\,\zeta=\eta\}$,
we conclude that $\mathcal{L}^1$ is a bounded operator in $C(\T^d)$.  The
function
$\int\limits_{\mathbb R^d}\frac{(\Lambda(\eta,\zeta)-\Lambda(\zeta,\zeta))}
{\Lambda(\zeta,\zeta)|\zeta-\eta|^{d+\alpha}}\,d\eta$ is  continuous  and
periodic. Therefore, the operator   $\mathcal{L}^2$ is also bounded in
$C(\T^d)$.

\begin{lemma}\label{l_hold}
There exists $\beta>0$ such that for any $\lambda>0$ the resolvent
$(\mathcal{L}^s+ \lambda\mathbf{I})^{-1}$ is a bounded operator from $C(\mathbb
T^d)$ to $C^\beta(\T^d)$. Moreover,
the following estimate holds
$$
\|(\mathcal{L}^s+ \lambda\mathbf{I})^{-1}\|\big._{C(\T^d)\to C(\mathbb
T^d)}\leq \lambda^{-1}.
$$
\end{lemma}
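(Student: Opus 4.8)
The operator $\mathcal{L}^s q(\zeta) = \int_{\R^d} \frac{q(\zeta)-q(\eta)}{|\zeta-\eta|^{d+\alpha}}\,d\eta$ is (a constant multiple of) the fractional Laplacian $(-\Delta)^{\alpha/2}$ acting on periodic functions, so it is a nonnegative self-adjoint operator on $L^2(\T^d)$ generating a symmetric Markov semigroup. The plan is to obtain the two assertions separately: the $L^\infty$-contraction bound from the maximum principle (or equivalently from the Markov/sub-Markov property of $e^{-t\mathcal{L}^s}$), and the Hölder-regularizing property from the known regularity theory for the fractional Laplacian.

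First I would prove the norm estimate $\|(\mathcal{L}^s+\lambda\mathbf{I})^{-1}\|_{C(\T^d)\to C(\T^d)}\le \lambda^{-1}$. The cleanest route is the semigroup representation: since $\mathcal{L}^s$ generates the periodic fractional heat semigroup $e^{-t\mathcal{L}^s}$, which is a contraction on $C(\T^d)$ (it is given by convolution against a probability density on $\T^d$, being sub-Markovian and conservative), one has $(\mathcal{L}^s+\lambda\mathbf{I})^{-1} = \int_0^\infty e^{-\lambda t} e^{-t\mathcal{L}^s}\,dt$, and hence $\|(\mathcal{L}^s+\lambda\mathbf{I})^{-1}\|_{C(\T^d)\to C(\T^d)}\le \int_0^\infty e^{-\lambda t}\,dt = \lambda^{-1}$. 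Alternatively one can argue directly via the maximum principle: if $(\mathcal{L}^s+\lambda)q=g$ and $\zeta_0$ is a point where $q$ attains its maximum over $\T^d$, then $\mathcal{L}^s q(\zeta_0)\ge 0$ since $q(\zeta_0)-q(\eta)\ge 0$ for all $\eta$, so $\lambda q(\zeta_0)\le g(\zeta_0)\le \|g\|_{C(\T^d)}$; applying the same to $-q$ gives $\lambda\|q\|_{C(\T^d)}\le \|g\|_{C(\T^d)}$.

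Next I would establish the $C^\beta$ gain. Write $q=(\mathcal{L}^s+\lambda\mathbf{I})^{-1}g$, so $q$ solves $(-\Delta)^{\alpha/2}q+\lambda q = g$ on $\T^d$ (up to the harmless positive constant). Lifting to $\R^d$-periodic functions and using interior regularity estimates for the fractional Laplacian — e.g. the Schauder-type / De Giorgi–Nash–Moser estimates for $(-\Delta)^{\alpha/2}$ with bounded right-hand side (see Caffarelli–Silvestre, or Silvestre's regularity results) — one gets that a periodic $L^\infty$ solution of $(-\Delta)^{\alpha/2}q = g-\lambda q \in L^\infty(\T^d)$ lies in $C^\beta(\T^d)$ with $\|q\|_{C^\beta(\T^d)}\le C\big(\|g-\lambda q\|_{L^\infty}+\|q\|_{L^\infty}\big)\le C'\|g\|_{C(\T^d)}$, the last step using the already-proved bound $\lambda\|q\|_\infty\le \|g\|_\infty$. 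The exponent $\beta\in(0,1)$ depends only on $\alpha$ and $d$, not on $\lambda$; indeed $\beta$ can be taken to be any number less than $\alpha$ (when $\alpha<1$ one may even take $\beta=\alpha$ via the representation $q = (\mathcal{L}^s+\lambda)^{-1}g$ and direct estimation of the Riesz-type kernel). The $\lambda$-uniformity is what matters for the application, and it is transparent because increasing $\lambda$ only helps: the kernel of $(\mathcal{L}^s+\lambda)^{-1}$ is dominated by that of $(\mathcal{L}^s)^{-1}$ on $\T^d$ off the diagonal, and near the diagonal it has the same $|\zeta-\eta|^{\alpha-d}$ singularity regardless of $\lambda$.

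The main obstacle is making the Hölder estimate genuinely uniform in $\lambda$ down to $\lambda\to 0$: the naive constant in interior fractional Schauder estimates could in principle blow up. I would circumvent this by working directly with the Green kernel $G_\lambda(\zeta,\eta)$ of $(\mathcal{L}^s+\lambda\mathbf{I})^{-1}$ on $\T^d$, writing $q(\zeta)=\int_{\T^d}G_\lambda(\zeta,\eta)g(\eta)\,d\eta$, and observing that $0\le G_\lambda\le G_0$ off the diagonal where $G_0$ is the (integrable) periodic Riesz kernel with the $|\zeta-\eta|^{\alpha-d}$ singularity, together with the matching gradient bound $|\nabla_\zeta G_\lambda(\zeta,\eta)|\le C|\zeta-\eta|^{\alpha-1-d}$ near the diagonal (again $\alpha<1$ is used here). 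Splitting the integral into near- and far-field and estimating increments $q(\zeta_1)-q(\zeta_2)$ in the standard way then yields $|q(\zeta_1)-q(\zeta_2)|\le C|\zeta_1-\zeta_2|^{\beta}\|g\|_{C(\T^d)}$ with a constant independent of $\lambda\ge 0$. This kernel-level argument is elementary and sidesteps any delicate compactness or scaling considerations.
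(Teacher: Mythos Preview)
Your proposal is correct and, for the second assertion, matches the paper exactly (the paper simply says ``follows directly from the maximum principle''). For the H\"older gain the paper takes one specific route among those you outline: it writes $u=\int_0^\infty e^{-\lambda t}P_tf\,dt$, invokes the known gradient bound $|\nabla P_tf(x)|\le c\,t^{-1/\alpha}\|f\|_\infty$ for the fractional heat semigroup, splits the time integral at $\rho=|x-y|^\alpha$, and reads off the exponent $\beta=\alpha$. Your Green-kernel argument is essentially a repackaging of this, since $G_\lambda=\int_0^\infty e^{-\lambda t}p_t\,dt$ and the semigroup gradient estimate is precisely what gives the near-diagonal bound $|\nabla_\zeta G_\lambda|\le C|\zeta-\eta|^{\alpha-1-d}$ you quote; the Schauder/Silvestre black-box route you also mention is a legitimate alternative that the paper does not use.

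One remark: you devote effort to making the $C^\beta$ bound uniform as $\lambda\to 0$, but neither the lemma as stated nor its application in Lemma~\ref{l_ctoc} (where only $\lambda\ge\lambda_0$ is needed) requires this. The paper's constant in fact depends on $\lambda$ (through a term $c_3(\lambda)=\lambda^{-1}e^{-\lambda}$), and that is enough.
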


The second statement follows directly from the maximum principle. We
reformulate the first statement as a separate result.

\begin{proposition}
Let $f \in C(\T^d), \lambda > 0$. There are constants $\delta > 0$, $c \geq 1$
such that for every function $u \in H^\alpha(\T^d)$ satisfying
\begin{align}\label{eq:resol-eq}
 (-\Delta)^\frac{\alpha}{2} u + \lambda u = f \quad \text{ in } \mathbb T^d
\end{align}
the following estimate holds:
\begin{align}\label{eq:holder-estim}
\|u\|_{C^\delta(\T)} \leq c  \|f\|_{C(\T^d)}
\end{align}
\end{proposition}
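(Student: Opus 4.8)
The plan is to represent $u$ as a convolution on the torus with the $\lambda$-resolvent kernel of the fractional Laplacian and to reduce the Hölder bound to an $L^1$ estimate on the translates of that kernel. Let $p_t$ denote the transition density of the rotationally symmetric $\alpha$-stable process on $\R^d$, so that $p_t(x)=t^{-d/\alpha}\Phi(t^{-1/\alpha}x)$, where $\Phi=p_1$ is smooth with $\Phi(z)\lesssim(1+|z|)^{-d-\alpha}$ and $|\nabla\Phi(z)|\lesssim(1+|z|)^{-d-\alpha-1}$, so in particular $\|\nabla\Phi\|_{L^1(\R^d)}<\infty$. Let $\tilde p_t$ be the periodization of $p_t$ and put $\tilde G_\lambda=\int_0^\infty e^{-\lambda t}\tilde p_t\,dt$; then $\tilde G_\lambda\in L^1(\T^d)$ is non-negative with $\|\tilde G_\lambda\|_{L^1(\T^d)}=\int_0^\infty e^{-\lambda t}\,dt=\lambda^{-1}$, since $\int_{\T^d}\tilde p_t\,dx=1$. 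Because $(-\Delta)^{\alpha/2}+\lambda$ on $\T^d$ with domain $H^\alpha(\T^d)$ is self-adjoint, non-negative and hence boundedly invertible, \eqref{eq:resol-eq} has a unique solution in $H^\alpha(\T^d)$; comparing Fourier coefficients shows that $\tilde G_\lambda*f$ belongs to $H^\alpha(\T^d)$ and solves \eqref{eq:resol-eq}, so $u=\tilde G_\lambda*f$. In particular $\|u\|_{C(\T^d)}\le\|\tilde G_\lambda\|_{L^1(\T^d)}\|f\|_{C(\T^d)}=\lambda^{-1}\|f\|_{C(\T^d)}$, which is the maximum-principle estimate already recorded in \autoref{l_hold}.

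For the Hölder seminorm, for every $h\in\R^d$ we have
\begin{align*}
|u(x+h)-u(x)| &=\Big|\int_{\T^d}\big(\tilde G_\lambda(x+h-y)-\tilde G_\lambda(x-y)\big)f(y)\,dy\Big| \\
&\le\|f\|_{C(\T^d)}\,\big\|\tilde G_\lambda(\cdot+h)-\tilde G_\lambda\big\|_{L^1(\T^d)}.
\end{align*}
Periodization is an $L^1$-contraction that commutes with translation, so $\|\tilde G_\lambda(\cdot+h)-\tilde G_\lambda\|_{L^1(\T^d)}\le\|G_\lambda(\cdot+h)-G_\lambda\|_{L^1(\R^d)}$, where $G_\lambda=\int_0^\infty e^{-\lambda t}p_t\,dt$, and by Minkowski's inequality the right-hand side is at most $\int_0^\infty e^{-\lambda t}\|p_t(\cdot+h)-p_t\|_{L^1(\R^d)}\,dt$. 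Scaling gives $\|\nabla p_t\|_{L^1(\R^d)}=\|\nabla\Phi\|_{L^1(\R^d)}\,t^{-1/\alpha}$, so the fundamental theorem of calculus together with $\|p_t\|_{L^1(\R^d)}=1$ yields
\begin{align*}
\|p_t(\cdot+h)-p_t\|_{L^1(\R^d)}\le\min\big(2,\,C|h|\,t^{-1/\alpha}\big).
\end{align*}

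Inserting this and splitting the integral at $t=|h|^\alpha$, the part over $(0,|h|^\alpha)$ contributes at most $2|h|^\alpha$, while on $(|h|^\alpha,\infty)$ we bound $e^{-\lambda t}\le1$ and use $\int_{|h|^\alpha}^\infty t^{-1/\alpha}\,dt=\tfrac{\alpha}{1-\alpha}|h|^{\alpha-1}$ — this is exactly where the standing hypothesis $\alpha\in(0,1)$ enters, so that $1/\alpha>1$ and the tail integral converges — to obtain a further contribution of order $|h|^\alpha$. Hence $\|\tilde G_\lambda(\cdot+h)-\tilde G_\lambda\|_{L^1(\T^d)}\le C_{d,\alpha}\,|h|^\alpha$ with $C_{d,\alpha}$ independent of $\lambda$, so $[u]_{C^\delta(\T^d)}\le C_{d,\alpha}\|f\|_{C(\T^d)}$ for any $\delta\in(0,\alpha]$; combined with the $C^0$ bound this gives \eqref{eq:holder-estim} with $c=\lambda^{-1}+C_{d,\alpha}$ (in particular $c$ is uniform for $\lambda$ bounded away from $0$).

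The only substantial point is the kernel estimate above, i.e.\ the sharp $t^{-1/\alpha}$ decay of $\|\nabla p_t\|_{L^1}$ combined with convergence of the subordination integral, and this is precisely where $\alpha<1$ is used. An alternative, avoiding heat-kernel computations altogether, is to invoke the known interior Hölder estimates for the fractional Laplacian with bounded right-hand side (see, e.g., the work of Silvestre and of Caffarelli--Silvestre): apply them to $g:=f-\lambda u\in L^\infty(\T^d)$, which satisfies $\|g\|_{L^\infty(\T^d)}\le2\|f\|_{C(\T^d)}$ by the maximum principle, on a finite cover of $\T^d$ by balls, and pass to a global estimate using periodicity, no boundary terms arising since $\T^d$ has no boundary.
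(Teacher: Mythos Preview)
Your proof is correct and follows essentially the same route as the paper's: both represent $u$ via the $\alpha$-stable heat semigroup, exploit the scaling $\|\nabla p_t\|_{L^1}\asymp t^{-1/\alpha}$ (equivalently $\|\nabla P_t f\|_\infty\lesssim t^{-1/\alpha}\|f\|_\infty$), split the Laplace integral at $t=|h|^\alpha$, and invoke $\alpha<1$ to make the tail $\int_{|h|^\alpha}^\infty t^{-1/\alpha}\,dt$ converge, arriving at $\delta=\alpha$. The only cosmetic difference is that you phrase the key step as an $L^1$-modulus-of-continuity estimate on the resolvent kernel $G_\lambda$ while the paper bounds $|P_tf(x)-P_tf(y)|$ directly; the two are equivalent by Young's inequality.
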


\begin{proof}
There are several ways to prove this result. One option would be to apply
embedding results for the Riesz potential. Another option would be to use
the Harnack inequality. Here, we give a proof based on the corresponding heat
equation and the representation of solutions with the help
of the fundamental solution. Let $(P_t)$ denote the contraction semigroup of
the operator $\partial_t +
(-\Delta)^\frac{\alpha}{2}$ in $(0,\infty)\times\mathbb R^d$.
It is known that for  $f\in L^\infty(\mathbb R^d)$ the
function $P_t f$ belongs to $C^\infty(\mathbb R^d)$ and satisfies
\begin{align}\label{eq:estim-heat-semigroup}
|\nabla P_t f (x)| \leq c_1 t^{-1/\alpha} \|f\|_\infty \qquad \hbox{for all }x \in \R^d
\end{align}
with some contant $c_1 \geq 1$ independent of $x$. This is proved in several
works, e.g. in \cite[Theorem 3.2]{MR2555009}. In order to prove
\eqref{eq:holder-estim}, let $u$ be a solution to \eqref{eq:resol-eq} and $x,y
\in \R^d$. We only need to consider the case $|x-y| \leq 1$. Assume $\rho \in
(0,1)$. Then
\begin{align*}
| u(x) - u(y) | &\leq \int\limits_0^\infty e^{- \lambda t} |P_t f(x) - P_t
f(y)| dt \\
&\leq \int\limits_0^\rho e^{- \lambda t}
|P_t f(x) - P_t f(y)| dt + \int\limits_\rho^\infty e^{- \lambda t}
|P_t f(x) - P_t f(y)| dt \,.
\end{align*}
The first integral is estimated from above as follows:
\begin{align*}
\int\limits_0^\rho e^{- \lambda t} |P_t f(x) - P_t f(y)| dt &\leq 2
\|f\|_\infty
\int\limits_0^\rho e^{- \lambda t} dt \\
&= 2 \|f\|_\infty \frac{1}{\lambda} ( 1-
e^{-\lambda
\rho}) \leq 2 \rho\|f\|_\infty  \,.
\end{align*}
For the estimate of the second integral we apply
\eqref{eq:estim-heat-semigroup} and obtain
\begin{align*}
\int\limits_\rho^\infty e^{- \lambda t} |P_t f(x) - P_t f(y)| dt \leq c_1
\|f\|_\infty |x-y| \int\limits_\rho^\infty e^{- \lambda t} t^{-1/\alpha} dt
\end{align*}
Note that for $\alpha < 1$  we have
\begin{align*}
\int\limits_\rho^\infty e^{- \lambda t} t^{-1/\alpha} dt &\leq
\int\limits_\rho^1 t^{-1/\alpha} dt + \int\limits_1^\infty e^{- \lambda t}  dt
\leq \frac{\alpha-1}{\alpha} (1- \rho^{\frac{\alpha-1}{\alpha}}) +
\frac{1}{\lambda} e^{-\lambda} \\
&\leq c_2(\alpha) \max\{1, \rho^{\frac{\alpha-1}{\alpha}}\} + c_3(\lambda) \,.
\end{align*}
Hence, we obtain for $\alpha< 1$
\begin{align*}
\int\limits_\rho^\infty e^{- \lambda t} |P_t f(x) - P_t f(y)| dt \leq c_1 |x-y|
\|f\|_\infty \Big( c_2 \max\{1, \rho^{\frac{\alpha-1}{\alpha}}\} +
c_3 \rho \Big)
\end{align*}
Now we choose $\rho=|x-y|^\alpha$. Combining the estimates of the two
integrals, we obtain the desired result with $\delta=\alpha$.
\end{proof}

\pagebreak[3]

\begin{lemma}\label{l_ctoc}
There exist $\lambda_0>0$ and $\beta>0$ such that for all $\lambda\geq
\lambda_0$ the resolvent
$(\mathcal{L}^s+ \mathcal{L}^1+ \mathcal{L}^2 +\lambda\mathbf{I})^{-1}$ is a
bounded operator from $C(\T^d)$ to $C^\beta(\T^d)$.
\end{lemma}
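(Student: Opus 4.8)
The plan is to regard $\mathcal{L}^1+\mathcal{L}^2$ as a bounded perturbation of the operator $\mathcal{L}^s$ (which is, up to a positive constant, the fractional Laplacian on $\T^d$) and to invert $\mathcal{L}^s+\mathcal{L}^1+\mathcal{L}^2+\lambda\mathbf{I}$ by a Neumann series, once $\lambda$ is large enough to make the perturbation small relative to the resolvent $(\mathcal{L}^s+\lambda\mathbf{I})^{-1}$. Write $A_\lambda:=\mathcal{L}^s+\lambda\mathbf{I}$ and $B:=\mathcal{L}^1+\mathcal{L}^2$, and let $\beta>0$ be the exponent provided by \autoref{l_hold}. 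From the discussion preceding \autoref{l_hold}, $B$ is a bounded operator on $C(\T^d)$ with a norm $\|B\|_{\mathcal{L}(C(\T^d))}$ independent of $\lambda$, since $\mathcal{L}^1$ has a kernel that is integrable on $\R^d\times\T^d$ and continuous off the diagonal, and $\mathcal{L}^2$ is multiplication by a bounded continuous periodic function. Set $\lambda_0:=2\|B\|_{\mathcal{L}(C(\T^d))}+1$.

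First I would record the factorization
$$
\mathcal{L}^s+\mathcal{L}^1+\mathcal{L}^2+\lambda\mathbf{I}=A_\lambda+B=(\mathbf{I}+B A_\lambda^{-1})\,A_\lambda\qquad\text{on }H^\alpha(\T^d),
$$
where $A_\lambda^{-1}$ is the bounded operator $C(\T^d)\to C^\beta(\T^d)$ furnished by \autoref{l_hold}; splitting $A_\lambda$ off on the \emph{right} is the essential choice, since it makes the smoothing factor act last. Using the bound $\|A_\lambda^{-1}\|_{\mathcal{L}(C(\T^d))}\le\lambda^{-1}$ from \autoref{l_hold}, we get for every $\lambda\ge\lambda_0$
$$
\|B A_\lambda^{-1}\|_{\mathcal{L}(C(\T^d))}\le\|B\|_{\mathcal{L}(C(\T^d))}\,\lambda^{-1}\le\tfrac12,
$$
so $\mathbf{I}+B A_\lambda^{-1}$ is invertible on $C(\T^d)$ via the convergent Neumann series $\sum_{k\ge0}(-B A_\lambda^{-1})^k$, with $\|(\mathbf{I}+B A_\lambda^{-1})^{-1}\|_{\mathcal{L}(C(\T^d))}\le2$.

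Next I would set $R_\lambda:=A_\lambda^{-1}(\mathbf{I}+B A_\lambda^{-1})^{-1}$ and check, using the factorization above, that $(A_\lambda+B)R_\lambda=\mathbf{I}$ on $C(\T^d)$ and $R_\lambda(A_\lambda+B)=\mathbf{I}$ on $H^\alpha(\T^d)$, so that $R_\lambda$ is exactly the resolvent $(\mathcal{L}^s+\mathcal{L}^1+\mathcal{L}^2+\lambda\mathbf{I})^{-1}$. Viewing $R_\lambda$ as the composition in which $(\mathbf{I}+B A_\lambda^{-1})^{-1}$ maps $C(\T^d)$ into $C(\T^d)$ and $A_\lambda^{-1}$ then maps $C(\T^d)$ into $C^\beta(\T^d)$, we conclude that for all $\lambda\ge\lambda_0$ and $f\in C(\T^d)$
$$
\|R_\lambda f\|_{C^\beta(\T^d)}\le\|A_\lambda^{-1}\|_{\mathcal{L}(C(\T^d),C^\beta(\T^d))}\,\|(\mathbf{I}+B A_\lambda^{-1})^{-1}\|_{\mathcal{L}(C(\T^d))}\,\|f\|_{C(\T^d)}\le2\,\|A_\lambda^{-1}\|_{\mathcal{L}(C(\T^d),C^\beta(\T^d))}\,\|f\|_{C(\T^d)},
$$
which gives the asserted boundedness of $(\mathcal{L}^s+\mathcal{L}^1+\mathcal{L}^2+\lambda\mathbf{I})^{-1}\colon C(\T^d)\to C^\beta(\T^d)$.

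The argument is routine once \autoref{l_hold} is in hand; the only delicate point is the order of the factorization. One must peel off $A_\lambda$ on the right, so that the merely $C(\T^d)\to C(\T^d)$ Neumann correction is applied first and the $C(\T^d)\to C^\beta(\T^d)$ smoothing of $A_\lambda^{-1}$ last — there is no reason for $(\mathbf{I}+B A_\lambda^{-1})^{-1}$ to preserve $C^\beta(\T^d)$, so the alternative factorization $A_\lambda(\mathbf{I}+A_\lambda^{-1}B)$ would not directly yield the Hölder bound. The secondary point, that $\|B\|_{\mathcal{L}(C(\T^d))}$ be uniform in $\lambda$, is ensured by the structure of $\mathcal{L}^1$ and $\mathcal{L}^2$ recalled above.
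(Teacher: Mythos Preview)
Your proof is correct and follows essentially the same approach as the paper: both use the factorization $\mathcal{L}^s+\mathcal{L}^1+\mathcal{L}^2+\lambda\mathbf{I}=(\mathbf{I}+B A_\lambda^{-1})A_\lambda$, invert $\mathbf{I}+B A_\lambda^{-1}$ on $C(\T^d)$ by a Neumann series once $\lambda\ge\lambda_0\approx 2\|B\|_{\mathcal{L}(C(\T^d))}$, and then apply the $C\to C^\beta$ smoothing of $A_\lambda^{-1}$ from \autoref{l_hold}. Your write-up is simply more detailed, including the explicit verification that $R_\lambda$ is a two-sided inverse and the remark on the order of the factors.
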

\begin{proof}
We have
$$
(\mathcal{L}^s+ \mathcal{L}^1+ \mathcal{L}^2 +\lambda\mathbf{I})^{-1}=
\big([\mathbf{I}+( \mathcal{L}^1+
\mathcal{L}^2)(\mathcal{L}^s+\lambda\mathbf{I})^{-1}](\mathcal{L}
^s+\lambda\mathbf{I})\big)^{-1}
$$
$$
=(\mathcal{L}^s+\lambda\mathbf{I})^{-1}[\mathbf{I}+( \mathcal{L}^1+
\mathcal{L}^2)(\mathcal{L}^s+\lambda\mathbf{I})^{-1}]^{-1}
$$
Letting $\lambda_0=2\|\mathcal{L}^1+\mathcal{L}^2\|\big._{C(\T^d)\to
C(\T^d)}$   one can easily check that
$[\mathbf{I}+( \mathcal{L}^1+
\mathcal{L}^2)(\mathcal{L}^s+\lambda\mathbf{I})^{-1}]^{-1}$ is a bounded
operator in $C(\T^d)$
for any $\lambda> \lambda_0$. Combining this with the first statement of
 \autoref{l_hold}, we obtain the required statement.
\end{proof}

The operator $\mathcal{L}^\star:= (\mathcal{L}^s+\mathcal{L}^1+\mathcal{L}^2)$
is adjoint to the operator $\mathcal{L}$
defined by
$$
\mathcal{L} q(\zeta)=\int\limits_{\mathbb R^d}\frac{\Lambda(\zeta,\eta)
\big((\Lambda(\eta,\eta))^{-1}q(\eta)-((\Lambda(\zeta,\zeta))^{-1}q(\zeta)\big)}
{|\zeta-\eta|^{d+\alpha}}d\eta.
$$

In the same way as in the proof of \autoref{l_hold} and \autoref{l_ctoc}
one can show that the resolvent $(\mathcal{L}+\lambda)^{-1}$
is a bounded operator from $C(\T^d)$ to $C^\beta(\T^d)$ for
sufficiently large positive $\lambda$.

Considering the properties of the function $\Lambda(\zeta,\zeta)$ and the
definition of operator $\mathcal{L}^\star$ it is
straightforward to see that for sufficiently large $\lambda$ both
$(-L+\lambda)^{-1}$ and  $(-L^\star+\lambda)^{-1}$
are bounded operator from $C(\T^d)$ to $C^\beta(\T^d)$.
Indeed,  taking $\lambda_1>\lambda(\min \Lambda(\xi,\xi))^{-1}$ we have
$$
-L^*+\lambda=\Lambda(\zeta,\zeta)\left(\mathcal{L}^*+\lambda_1+\frac{\lambda}{
\Lambda(\zeta,\zeta)}-\lambda_1\right)
$$
$$
\Lambda(\zeta,\zeta)\left({\bf
I}+\big(\frac{\lambda}{\Lambda(\zeta,\zeta)}-\lambda_1\big)(\mathcal{L}
^*+\lambda_1)^{-1}\right)
(\mathcal{L}^*+\lambda_1)
$$
Since $\|(\mathcal{L}^*+\lambda_1)^{-1}\|_{\mathcal{L}(C(\T^d),C(\mathbb
T^d))}\leq \lambda^{-1}_1$, then
$$
\|\big(\frac{\lambda}{\Lambda(\zeta,\zeta)}-\lambda_1\big)(\mathcal{L}
^*+\lambda_1)^{-1}\|_{\mathcal{L}(C(\T^d),C(\T^d))}\leq
\lambda^{-1}_1\big| \lambda_1-\frac{\lambda}{\max\Lambda(\zeta,\zeta)}\big|<1.
$$
Therefore,
$$
(-L^*+\lambda)^{-1}=(\mathcal{L}^*+\lambda_1)^{-1}
\left({\bf
I}+\big(\frac{\lambda}{\Lambda(\zeta,\zeta)}-\lambda_1\big)(\mathcal{L}
^*+\lambda_1)^{-1}\right)^{-1}
(\Lambda\zeta,\zeta))^{-1}
$$
is a bounded operator from $C(\T^d)$ to $C^\beta(\T^d)$.
The fact that $(-L+\lambda)^{-1}$ is bounded operator from $C(\T^d)$ to
$C^\beta(\T^d)$ can be justified in the same way. This implies in
particular that both  $(-L+\lambda)^{-1}$ and $(-L^*+\lambda)^{-1}$ are compact
operators in
$C(\T^d)$.\\
Also, from the maximum principle it follows that
$\|(-L+\lambda)^{-1}\|_{\mathcal{L}(C(\T^d),C(\T^d))}\leq
\lambda^{-1}$.

By the standard maximum principle arguments, the operator  $(L+\lambda)^{-1}$
maps the set of non-negative continuous non-zero  functions on
$\T^d$ to the set of strictly positive continuous functions on $\mathbb
T^d$. Therefore, the Krein-Rutman theorem applies to the operator
 $(-L+\lambda)^{-1}$.

 It is easy to check that $v=1$ is the principal eigenfunction of
$(-L+\lambda)^{-1}$ and that the corresponding eigenvalue $\mu_0$
 is equal to $\lambda^{-1}$.

By the Krein-Rutman theorem  the adjoint operator
$\big((-L+\lambda)^{-1}\big)^*=(-L^*+\lambda)^{-1}$ maps the cone of
non-negative measures into itself, its principal eigenvalue is $\lambda^{-1}$,
and the corresponding eigenmeasure is positive. Since the adjoint operator maps
the space of continuous functions into itself, it maps the cone of non-negative
continuous functions into itself.

 Applying the maximum principle arguments we conclude that for any non-trivial
continuous non-negative function  $v$ on  $\T^d$
 the function $(-{L}^*+\lambda)^{-1}v$ is strictly positive. Hence the
Krein-Rutman theorem applies to the operator
 $(-{L}^*+\lambda)^{-1}$.  Denote by  $\mu^*_0$  the principal eigenvalue of
this operator and by $p_0(\zeta)$ the corresponding
 continuous eigenfunction. Then the function $p_0(\zeta)$ is positive, and the
measure  $p_0(\zeta)\,d\zeta$ is a positive eigenmeasure. By \cite{KrRu50},
\cite[Chapter 4]{KLS89}  there is only one eigenmeasure in the cone
   of non-negative measures. Therefore, $\mu^*_0=\mu_0=\lambda^{-1}$, and the
function $p_0$ belongs to the kernel
 of the operator ${L}^*$. The proof of \autoref{theo:nonsym-eigenvalue} is
complete.
\end{proof}

\subsection{A priori estimates} \label{subsec:apriori_nonsymm}

Our next goal is to obtain a priori estimates for the solution of equation
\eqref{eq:ori_eqn}.

\begin{proposition}\label{p_nonsym1}
Assume that \eqref{eq:ori_eqn} holds true for some $f\in L^2(\mathbb R^d)$ and
$u^\eps\in \mathcal{D}(L^\eps)$.
Then there exist a constant $c$  such that
$$
\|u^\eps\|_{H^{\alpha/2}(\mathbb R^d)}\leq c(1+\frac1m)\|f\|_{L^{2}(\mathbb
R^d)},
$$
$$
\|u^\eps\|_{L^{2}(\mathbb R^d)}\leq \frac cm\|f\|_{L^{2}(\mathbb R^d)},
$$
The constant $c$ does not depend on $\eps$, neither on $m$.
\end{proposition}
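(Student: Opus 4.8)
The plan is to test equation \eqref{eq:ori_eqn} against the weighted function $p_0^\eps u^\eps$, where $p_0^\eps(x):=p_0(x/\eps)$ and $p_0$ is the principal eigenfunction of $L^*$ provided by \autoref{theo:nonsym-eigenvalue}. The point of this particular weight is that the relation $L^*p_0=0$ on $\T^d$ exactly cancels the non-symmetric (drift-type) part of the bilinear form associated with $L^\eps$; without such a cancellation that part would, because of the $\eps^{-\alpha}$ scaling intrinsic to the cell problem, blow up as $\eps\to0$ and no uniform bound could be obtained. Since $p_0$ is continuous and satisfies $p^-\le p_0\le p^+:=\max_{\T^d}p_0$ on the compact torus, we have $p^-\le p_0^\eps(x)\le p^+$ with constants independent of $\eps$; in particular $p_0^\eps u^\eps\in L^2(\R^d)$, and the pairing $(L^\eps u^\eps,p_0^\eps u^\eps)_{L^2(\R^d)}$ is well defined because $L^\eps u^\eps=m u^\eps-f\in L^2(\R^d)$.

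The heart of the matter is the coercivity identity
$$
-(L^\eps u,p_0^\eps u)_{L^2(\R^d)}
=\frac12\int_{\R^d}\int_{\R^d}\frac{\Lambda^\eps(x,y)\,p_0^\eps(x)\,(u(y)-u(x))^2}{|x-y|^{d+\alpha}}\,dy\,dx ,\qquad u\in\mathcal D(L^\eps).
$$
To obtain it one symmetrizes the left-hand integral under $x\leftrightarrow y$ and uses the algebraic splitting
$$
\Lambda^\eps(y,x)p_0^\eps(y)u(y)-\Lambda^\eps(x,y)p_0^\eps(x)u(x)
=\Lambda^\eps(x,y)p_0^\eps(x)\big(u(y)-u(x)\big)-b^\eps(x,y)\,u(y),
$$
where $b^\eps(x,y):=\Lambda^\eps(x,y)p_0^\eps(x)-\Lambda^\eps(y,x)p_0^\eps(y)$ is antisymmetric. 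The first summand generates the quadratic form above; the contribution of $-b^\eps(x,y)u(y)$ splits, via $u(y)=(u(y)-u(x))+u(x)$ and antisymmetry, into a piece that vanishes because $b^\eps(x,y)|x-y|^{-d-\alpha}$ is antisymmetric while $u(x)u(y)$ is symmetric, and the piece $-\tfrac12\int_{\R^d}u(y)^2\big(\int_{\R^d}b^\eps(x,y)|x-y|^{-d-\alpha}\,dx\big)\,dy$; the change of variables $x=\eps\zeta$ shows that the inner integral equals $\eps^{-\alpha}(L^*p_0)(y/\eps)=0$. Making these rearrangements rigorous is where the standing hypothesis $0<\alpha<1$ is used, together with the Lipschitz continuity of $\Lambda$ and the Hölder (indeed, after one bootstrap through the Schauder estimate for $(-\Delta)^{\alpha/2}$, Lipschitz) regularity of $p_0$: they give $|b^\eps(x,y)|\le C_\eps|x-y|^{\theta}$ near the diagonal for some $\theta>\alpha$, so that $b^\eps(x,y)|x-y|^{-d-\alpha}$ is integrable and all the manipulations are legitimate; the identity for a general $u\in\mathcal D(L^\eps)$ then follows by approximation in the graph norm of $L^\eps$.

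Granting the identity, the two estimates are routine. Pairing $-L^\eps u^\eps+mu^\eps=f$ with $p_0^\eps u^\eps$ and inserting the identity gives
\begin{align*}
\frac12\int_{\R^d}\int_{\R^d}\frac{\Lambda^\eps(x,y)p_0^\eps(x)(u^\eps(y)-u^\eps(x))^2}{|x-y|^{d+\alpha}}\,dy\,dx
+m\int_{\R^d}p_0^\eps (u^\eps)^2\,dx=\int_{\R^d}f\,p_0^\eps u^\eps\,dx .
\end{align*}
By \eqref{eq:elli} and $p_0^\eps\ge p^-$, the left-hand side is at least $c_0\int_{\R^d}\int_{\R^d}\frac{(u^\eps(y)-u^\eps(x))^2}{|x-y|^{d+\alpha}}\,dy\,dx+mp^-\|u^\eps\|_{L^2(\R^d)}^2$ with $c_0>0$ depending only on $\gamma,p^-,d,\alpha$, while the right-hand side is at most $p^+\|f\|_{L^2(\R^d)}\|u^\eps\|_{L^2(\R^d)}$. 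Discarding the double integral yields $\|u^\eps\|_{L^2(\R^d)}\le (p^+/p^-)\,m^{-1}\|f\|_{L^2(\R^d)}$, the second assertion; feeding this back into the inequality gives $c_0\int_{\R^d}\int_{\R^d}\frac{(u^\eps(y)-u^\eps(x))^2}{|x-y|^{d+\alpha}}\,dy\,dx\le (p^+)^2(p^-)^{-1}m^{-1}\|f\|_{L^2(\R^d)}^2$, and since $\|u^\eps\|_{H^{\alpha/2}(\R^d)}$ is bounded by a constant times $\|u^\eps\|_{L^2(\R^d)}$ plus the square root of that double integral, we get $\|u^\eps\|_{H^{\alpha/2}(\R^d)}\le C(m^{-1}+m^{-1/2})\|f\|_{L^2(\R^d)}\le c(1+m^{-1})\|f\|_{L^2(\R^d)}$, because $m^{-1/2}\le 1+m^{-1}$. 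All constants depend only on $\gamma$, $p^\pm$ and $d,\alpha$, in particular not on $\eps$ nor on $m$. The only genuinely non-routine point is the justification of the coercivity identity — the absolute convergence of the integrals built from $b^\eps$ and the vanishing of its diagonal contribution — which is precisely where $\alpha<1$ and the regularity of $\Lambda$ and $p_0$ enter, and which has no counterpart in the symmetric situation of \autoref{theo:symm-kernels}.
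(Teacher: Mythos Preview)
Your approach is essentially the paper's: test \eqref{eq:ori_eqn} against $p_0(\tfrac{\cdot}{\eps})\,u^\eps$, use $L^*p_0=0$ to kill the non-symmetric contribution, and read off the two estimates from the resulting weighted energy identity \eqref{apri2}. Your algebra is organised via the antisymmetric kernel $b^\eps(x,y)=\Lambda^\eps(x,y)p_0^\eps(x)-\Lambda^\eps(y,x)p_0^\eps(y)$, whereas the paper adds and subtracts $\Lambda(\tfrac y\eps,\tfrac x\eps)p_0(\tfrac y\eps)(u^\eps(x))^2$ directly and identifies the vanishing term as \eqref{zeroint}; these are the same computation in different notation. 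Two small remarks: (i) the equation is $(L^\eps-m)u^\eps=f$, so the right-hand side of your displayed energy identity should carry a minus sign, as in \eqref{apri2} --- this is harmless for the estimates; (ii) you are more explicit than the paper about why the rearrangements are legitimate (integrability of $b^\eps|x-y|^{-d-\alpha}$ near the diagonal), which is a genuine point, though the claim that $p_0$ becomes Lipschitz after a Schauder bootstrap goes beyond what the paper actually proves (only $C^\beta$ is established).
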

\begin{proof}
Multiplying equation    \eqref{eq:ori_eqn} by $u^\eps(x)p_0(\frac x\eps)$ and
integrating the resulting relation over $\mathbb R^d$ yields
\begin{align}\label{apri1}
\begin{array}{c}
\displaystyle
\int\limits_{\mathbb R^d}\!\!\int\limits_{\mathbb R^d}\frac{\Lambda\big(\frac
x\eps,\frac y\eps\big)p_0\big(\frac x\eps\big)
\big(u^\eps(y)-u^\eps(x)\big)u^\eps(x)}{|x-y|^{d+\alpha}}dydx-m\!\int\limits_{
\mathbb R^d}\!p_0\big(\frac x\eps\big)(u^\eps(x))^2dx
\\[3mm]
\displaystyle
=\int\limits_{\mathbb R^d}p_0\big(\frac x\eps\big)u^\eps(x)f(x)dx
\end{array}
\end{align}
The first term here can be transformed as follows
\begin{align*}
\int\limits_{\mathbb R^d}\!\!\int\limits_{\mathbb R^d} &\frac{\Lambda\big(\frac
x\eps,\frac y\eps\big)p_0\big(\frac x\eps\big)
\big(u^\eps(y)-u^\eps(x)\big)u^\eps(x)}{|x-y|^{d+\alpha}}dydx \\
&=\int\limits_{\mathbb R^d}\!\!\int\limits_{\mathbb R^d}\frac{\Lambda\big(\frac
x\eps,\frac y\eps\big)p_0\big(\frac x\eps\big)
u^\eps(y)u^\eps(x)-\Lambda\big(\frac y\eps,\frac x\eps\big)p_0\big(\frac
y\eps\big)
(u^\eps(x))^2}{|x-y|^{d+\alpha}}dydx
\end{align*}
\begin{align}\label{zeroint}
+\int\limits_{\mathbb R^d}\!\!\int\limits_{\mathbb R^d}\frac{\Lambda\big(\frac
y\eps,\frac x\eps\big)p_0\big(\frac y\eps\big)
(u^\eps(x))^2-\Lambda\big(\frac x\eps,\frac y\eps\big)p_0\big(\frac x\eps\big)
(u^\eps(x))^2}{|x-y|^{d+\alpha}}dydx
\end{align}
$$
=\int\limits_{\mathbb R^d}\!\!\int\limits_{\mathbb R^d}\frac{\Lambda\big(\frac
x\eps,\frac y\eps\big)p_0\big(\frac x\eps\big)
u^\eps(y)u^\eps(x)-\Lambda\big(\frac x\eps,\frac y\eps\big)p_0\big(\frac
x\eps\big)
(u^\eps(y))^2}{|x-y|^{d+\alpha}}dydx
$$
$$
=-\int\limits_{\mathbb R^d}\!\!\int\limits_{\mathbb R^d}\frac{\Lambda\big(\frac
x\eps,\frac y\eps\big)p_0\big(\frac x\eps\big)
\big( u^\eps(y)-u^\eps(x)\big)
u^\eps(y)}{|x-y|^{d+\alpha}}dydx;
$$
here we have used the fact that by \autoref{theo:nonsym-eigenvalue} the
integral in
\eqref{zeroint} is equal to zero.  Considering these equalities one can rewrite
relation \eqref{apri1}
as follows
\begin{align}\label{apri2}
\begin{array}{c}
\displaystyle
\frac12\int\limits_{\mathbb R^d}\!\!\int\limits_{\mathbb
R^d}\frac{\Lambda\big(\frac x\eps,\frac y\eps\big)p_0\big(\frac x\eps\big)
\big(u^\eps(y)-u^\eps(x)\big)^2}{|x-y|^{d+\alpha}}dydx+m\!\int\limits_{\mathbb
R^d}\!p_0\big(\frac x\eps\big)(u^\eps(x))^2dx
\\[3mm]
\displaystyle 
=-\int\limits_{\mathbb R^d}p_0\big(\frac x\eps\big)u^\eps(x)f(x)dx
\end{array}
\end{align}
By \autoref{theo:nonsym-eigenvalue} the function $p_0$ satisfies the estimates
$0<p_-\leq
p_0(z)\leq p^+$. Therefore, we have
$$
\|u^\eps\|_{H^{\alpha/2}(\mathbb R^d)}\leq c(1+\frac1m)\|f\|_{L^{2}(\mathbb
R^d)},\qquad
\|u^\eps\|_{L^{2}(\mathbb R^d)}\leq \frac{p^+}{m p_-}\|f\|_{L^{2}(\mathbb R^d)}.
$$
This completes the proof of Proposition.
\end{proof}

From the estimates of \autoref{p_nonsym1} one can easily deduce that the
set of positive real numbers belongs to the
resolvent set of operator ${L}^\eps$.  In particular, equation
\eqref{eq:ori_eqn}
is well posed and it has a unique solution
$u^\eps\in \mathcal{D}(L^\eps)$.

\subsection{Passage to the limit} \label{subsec:limit_nonsymm}

According to the estimates of \autoref{p_nonsym1} the family $u^\eps$
converges for a subsequence, as $\eps\to0$,
to a function $u^0\in H^{\alpha/2}(\mathbb R^d)$, weakly in
$H^{\alpha/2}(\mathbb R^d)$. Furthermore,
$u^\eps\to u^0$ strongly in $L^2$ on any compact set in $\mathbb R^d$.

In order to characterize the function $u^0$ we multiply equation
\eqref{eq:ori_eqn}
by a test function $p_0(\frac x\eps)\varphi(x)$ with
$\varphi\in\C_0^\infty(\mathbb R^d)$
and integrate the resulting relation in $\mathbb R^d$. We have
\begin{align}\label{limpas1}
\begin{array}{c}
\displaystyle
\!\int\limits_{\mathbb R^d}\!\!\int\limits_{\mathbb
R^d}\!\frac{\Lambda\big(\frac x\eps,\frac y\eps\big)p_0\big(\frac x\eps\big)
\big(u^\eps(y)-u^\eps(x)\big)\varphi(x)}{|x-y|^{d+\alpha}}dydx-m\!\int\limits_{
\mathbb R^d}\!\!p_0\big(\frac x\eps\big)u^\eps(x)\varphi(x)dx
\\[3mm]
\displaystyle
=\int\limits_{\mathbb R^d}p_0\big(\frac x\eps\big)\varphi(x)f(x)dx
\end{array}
\end{align}
In the same way as in the proof of  \autoref{p_nonsym1} one can show
that
$$
\int\limits_{\mathbb R^d}\!\!\int\limits_{\mathbb R^d}\!\frac{\Lambda\big(\frac
x\eps,\frac y\eps\big)p_0\big(\frac x\eps\big)
\big(u^\eps(y)-u^\eps(x)\big)\varphi(x)}{|x-y|^{d+\alpha}}dydx
$$
$$
=-\int\limits_{\mathbb R^d}\!\!\int\limits_{\mathbb
R^d}\!\frac{\Lambda\big(\frac x\eps,\frac y\eps\big)p_0\big(\frac x\eps\big)
\big(\varphi(y)-\varphi(x)\big)u^\eps(y)}{|x-y|^{d+\alpha}}dydx.
$$
We represent $\mathbb R^d\times\mathbb R^d$  as the union of two sets
\begin{align}\label{divi_spa_nsym}
\mathbb R^d\times\mathbb R^d= G^\delta_4\cup G^\delta_5
\end{align}
with
\begin{align}\label{defn_g45}
G^\delta_4=\{(x,y)\,:\,|x-y|\leq\delta\},\quad
G^\delta_5=\{(x,y)\,:\,|x-y|>\delta\}.
\end{align}
Denote
$$
\mathcal{K}^\eps_\delta(x,y)=\frac{\Lambda\big(\frac x\eps,\frac
y\eps\big)p_0\big(\frac x\eps\big)
\big(\varphi(y)-\varphi(x)\big)}{|x-y|^{d+\alpha}}\ {\bf 1}_\delta(x-y),
$$
where ${\bf 1}_\delta(z)$ is an indicator function of the ball $\{z\in\mathbb
R^d\,:\,|z|\leq\delta\}$.
It is easy to check that
$$
0\leq \mathcal{K}^\eps_\delta(x,y)\leq C_\varphi |x-y|^{1-d-\alpha}{\bf
1}_\delta(x-y).
$$
Since the integral
$$
\int\limits_{\mathbb R^d}|z|^{1-d-\alpha}{\bf 1}_\delta(z)dz
$$
tends to zero, as $\delta\to 0$,
we have
\begin{align}\label{part1}
\int\limits_{\mathbb R^d}dx \bigg(\int\limits_{\mathbb
R^d}\mathcal{K}^\eps_\delta(x,y)u^\eps(y)dy\bigg)^2\leq
C(\delta)\|u^\eps\|^2_{L^ 2(\mathbb R^d)},
\end{align}
where $C(\delta)\to0$, as $\delta\to 0$.  On the set $G^\delta_5$ the kernel
is bounded. Therefore,
$$
\int\limits_{G^\delta_5}\frac{\Lambda\big(\frac x\eps,\frac
y\eps\big)p_0\big(\frac x\eps\big)
\big(\varphi(y)-\varphi(x)\big)u^\eps(y)}{|x-y|^{d+\alpha}}dydx\ \to \
\int\limits_{G^\delta_5}\frac{\langle\Lambda p_0\rangle
\big(\varphi(y)-\varphi(x)\big)u^0(y)}{|x-y|^{d+\alpha}}dydx
$$
where
$$
\langle\Lambda p_0\rangle=\int\limits_{\T^d\times \T^d}
\Lambda(\zeta,\eta)p_0(\zeta)d\zeta d\eta.
$$
Combining this convergence with \eqref{part1} we conclude that

\begin{align*}
\int\limits_{\mathbb R^d}&\!\!\int\limits_{\mathbb R^d}\!\frac{\Lambda\big(\frac
x\eps,\frac y\eps\big)p_0\big(\frac x\eps\big)
\big(\varphi(y)-\varphi(x)\big)u^\eps(y)}{|x-y|^{d+\alpha}}dydx \\
&\longrightarrow
\int\limits_{\mathbb R^d}\!\!\int\limits_{\mathbb R^d}\!\frac{\langle\Lambda
p_0\rangle
\big(\varphi(y)-\varphi(x)\big)u^0(y)}{|x-y|^{d+\alpha}}dydx,
\end{align*}
as $\eps\to0$. Therefore,

\begin{align*}
\int\limits_{\mathbb R^d}&\!\!\int\limits_{\mathbb R^d}\!\frac{\Lambda\big(\frac
x\eps,\frac y\eps\big)p_0\big(\frac x\eps\big)
\big(u^\eps(y)-u^\eps(x)\big)\varphi(x)}{|x-y|^{d+\alpha}}dydx \\
&\longrightarrow
\int\limits_{\mathbb R^d}\!\!\int\limits_{\mathbb R^d}\!\frac{\langle\Lambda
p_0\rangle
\big(u^0(y)-u^0(x)\big)\varphi(x)}{|x-y|^{d+\alpha}}dydx \,.
\end{align*}
Passing to the limit in \eqref{limpas1} yields

\begin{align*}
\int\limits_{\mathbb R^d} &\!\!\int\limits_{\mathbb R^d}\!\frac{\langle\Lambda
p_0\rangle
\big(u^0(y)-u^0(x)\big)\varphi(x)}{|x-y|^{d+\alpha}}dydx -m \int\limits_{\mathbb
R^d}\langle p_0\rangle u^0(x)\varphi(x)dx \\
&= \int\limits_{\mathbb R^d}\langle p_0\rangle f(x))\varphi(x)dx.
\end{align*}

It remains to divide this equation by $\langle p_0\rangle $ and denote
$\Lambda^{\rm eff}=\langle p_0\rangle^{-1}\langle\Lambda p_0\rangle$. Then
the limit equation takes the form

\begin{align*}
\int\limits_{\mathbb R^d}\!\!\int\limits_{\mathbb R^d}\!\frac{\Lambda^{\rm eff}
\big(u^0(y)-u^0(x)\big)\varphi(x)}{|x-y|^{d+\alpha}}dydx -m\int\limits_{\mathbb
R^d} u^0(x)\varphi(x)dx=
\int\limits_{\mathbb R^d} f(x)\varphi(x)dx.
\end{align*}

Finally, we can complete the proof of \autoref{theo:nonsym}.   The weak
convergence in $H^{\alpha/2}(\mathbb R^ d)$ has already been proved. The
convergence in   $L^2(\mathbb R^ d)$ can be shown in exactly the same way as in
the proof of \autoref{theo:symm-kernels}.



{\small
\bibliographystyle{alpha}
\bibliography{kapizh}

\begin{thebibliography}{FBRS17}

\bibitem[Ari09]{MR2560294}
Mariko Arisawa.
\newblock Homogenization of a class of integro-differential equations with
  {L}\'evy operators.
\newblock {\em Comm. Partial Differential Equations}, 34(7-9):617--624, 2009.

\bibitem[Ari12]{MR2981018}
Mariko Arisawa.
\newblock Homogenizations of integro-differential equations with {L}\'evy
  operators with asymmetric and degenerate densities.
\newblock {\em Proc. Roy. Soc. Edinburgh Sect. A}, 142(5):917--943, 2012.

\bibitem[Bas09]{MR2555009}
Richard~F. Bass.
\newblock Regularity results for stable-like operators.
\newblock {\em J. Funct. Anal.}, 257(8):2693--2722, 2009.

\bibitem[BCCI14]{MR3194684}
Guy Barles, Emmanuel Chasseigne, Adina Ciomaga, and Cyril Imbert.
\newblock Large time behavior of periodic viscosity solutions for uniformly
  parabolic integro-differential equations.
\newblock {\em Calc. Var. Partial Differential Equations}, 50(1-2):283--304,
  2014.

\bibitem[BGG18]{BGG18}
L.~B\u{a}lilescu, A.~Ghosh, and T~Ghosh.
\newblock Homogenization for non-local elliptic operators in both perforated
  and non-perforated domains, 2018.
\newblock https://arxiv.org/pdf/1805.06264.

\bibitem[BJP99]{MR1798387}
V.~Berdichevsky, V.~Jikov, and G.~Papanicolaou, editors.
\newblock {\em Homogenization}, volume~50 of {\em Series on Advances in
  Mathematics for Applied Sciences}.
\newblock World Scientific Publishing Co., Inc., River Edge, NJ, 1999.
\newblock In memory of Serguei Kozlov.

\bibitem[Bra05]{braides2005gamma}
A.~Braides.
\newblock {\em Gamma -convergence for beginners}.
\newblock Oxford Lecture Series in Mathematics and its Applications. Oxford
  University Press, 2005.

\bibitem[CD99]{MR1765047}
Doina Cioranescu and Patrizia Donato.
\newblock {\em An introduction to homogenization}, volume~17 of {\em Oxford
  Lecture Series in Mathematics and its Applications}.
\newblock The Clarendon Press, Oxford University Press, New York, 1999.

\bibitem[CPS07]{MR2337848}
G.~A. Chechkin, A.~L. Piatnitski, and A.~S. Shamaev.
\newblock {\em Homogenization}, volume 234 of {\em Translations of Mathematical
  Monographs}.
\newblock American Mathematical Society, Providence, RI, 2007.
\newblock Methods and applications, Translated from the 2007 Russian original
  by Tamara Rozhkovskaya.

\bibitem[FBRS17]{MR3668594}
Juli\'an Fern\'andez~Bonder, Antonella Ritorto, and Ariel~Martin Salort.
\newblock {$H$}-convergence result for nonlocal elliptic-type problems via
  {T}artar's method.
\newblock {\em SIAM J. Math. Anal.}, 49(4):2387--2408, 2017.

\bibitem[Foc10]{MR2729014}
Matteo Focardi.
\newblock Aperiodic fractional obstacle problems.
\newblock {\em Adv. Math.}, 225(6):3502--3544, 2010.

\bibitem[Fra06]{MR2232732}
Brice Franke.
\newblock The scaling limit behaviour of periodic stable-like processes.
\newblock {\em Bernoulli}, 12(3):551--570, 2006.

\bibitem[Fra07a]{MR2331266}
Brice Franke.
\newblock Correction to: ``{T}he scaling limit behaviour of periodic
  stable-like processes'' [{B}ernoulli {\bf 12} (2006), no. 3, 551--570;
  mr2232732].
\newblock {\em Bernoulli}, 13(2):600, 2007.

\bibitem[Fra07b]{MR2362589}
Brice Franke.
\newblock A functional non-central limit theorem for jump-diffusions with
  periodic coefficients driven by stable {L}\'evy-noise.
\newblock {\em J. Theoret. Probab.}, 20(4):1087--1100, 2007.

\bibitem[FT94]{MR1784743}
Tsukasa Fujiwara and Matsuyo Tomisaki.
\newblock Martingale approach to limit theorems for jump processes.
\newblock {\em Stochastics Stochastics Rep.}, 50(1-2):35--64, 1994.

\bibitem[HIT77]{MR0488335}
Masayuki Horie, Tadataka Inuzuka, and Hiroshi Tanaka.
\newblock Homogenization of certain one-dimensional discontinuous {M}arkov
  processes.
\newblock {\em Hiroshima Math. J.}, 7(2):629--641, 1977.

\bibitem[JKOn94]{JKO94}
V.~V. Jikov, S.~M. Kozlov, and O.~A. Ole\u\i~nik.
\newblock {\em Homogenization of differential operators and integral
  functionals}.
\newblock Springer-Verlag, Berlin, 1994.
\newblock Translated from the Russian by G. A. Yosifian [G. A. Iosifyan].

\bibitem[KLS89]{KLS89}
M.~A. Krasnosel'skij, Je.~A. Lifshits, and A.~V. Sobolev.
\newblock {\em Positive linear systems}, volume~5 of {\em Sigma Series in
  Applied Mathematics}.
\newblock Heldermann Verlag, Berlin, 1989.
\newblock The method of positive operators, Translated from the Russian by
  J\"urgen Appell.

\bibitem[KR50]{KrRu50}
M.~G. Kre\u{\i}n and M.~A. Rutman.
\newblock Linear operators leaving invariant a cone in a {B}anach space.
\newblock {\em Amer. Math. Soc. Translation}, 1950(26):128, 1950.

\bibitem[PZ17]{PZ2017}
A.~Piatnitski and E.~Zhizhina.
\newblock Periodic homogenization of nonlocal operators with a convolution-type
  kernel.
\newblock {\em SIAM J. Math. Anal.}, 49(1):64--81, 2017.

\bibitem[RS75]{ReSi75}
Michael Reed and Barry Simon.
\newblock {\em Methods of modern mathematical physics. {II}. {F}ourier
  analysis, self-adjointness}.
\newblock Academic Press [Harcourt Brace Jovanovich, Publishers], New
  York-London, 1975.

\bibitem[RV09]{MR2565557}
R\'emi Rhodes and Vincent Vargas.
\newblock Scaling limits for symmetric {I}t\^o-{L}\'evy processes in random
  medium.
\newblock {\em Stochastic Process. Appl.}, 119(12):4004--4033, 2009.

\bibitem[San16]{San16}
Nikola Sandri\'c.
\newblock Homogenization of periodic diffusion with small jumps.
\newblock {\em J. Math. Anal. Appl.}, 435(1):551--577, 2016.

\bibitem[Sat13]{Sat13}
Ken-iti Sato.
\newblock {\em L\'evy processes and infinitely divisible distributions},
  volume~68 of {\em Cambridge Studies in Advanced Mathematics}.
\newblock Cambridge University Press, Cambridge, 2013.
\newblock Translated from the 1990 Japanese original, Revised edition of the
  1999 English translation.

\bibitem[Sch10]{MR2733264}
Russell~W. Schwab.
\newblock Periodic homogenization for nonlinear integro-differential equations.
\newblock {\em SIAM J. Math. Anal.}, 42(6):2652--2680, 2010.

\bibitem[Sch13]{MR3009077}
Russell~W. Schwab.
\newblock Stochastic homogenization for some nonlinear integro-differential
  equations.
\newblock {\em Comm. Partial Differential Equations}, 38(2):171--198, 2013.

\bibitem[Tar09]{MR2582099}
Luc Tartar.
\newblock {\em The general theory of homogenization}, volume~7 of {\em Lecture
  Notes of the Unione Matematica Italiana}.
\newblock Springer-Verlag, Berlin; UMI, Bologna, 2009.
\newblock A personalized introduction.

\bibitem[Tom92]{MR1154844}
Matsuyo Tomisaki.
\newblock Homogenization of c\`adl\`ag processes.
\newblock {\em J. Math. Soc. Japan}, 44(2):281--305, 1992.

\bibitem[Zhi03]{Zhikov2004}
V.~V. Zhikov.
\newblock On two-scale convergence.
\newblock {\em Tr. Semin. im. I. G. Petrovskogo}, (23):149--187, 410, 2003.
\newblock translation in J. Math. Sci. (N. Y.) 120 (2004), no. 3, 1328--1352.

\end{thebibliography}
}

\end{document}